\documentclass[12pt]{amsart}
\usepackage{amsmath, amssymb, stmaryrd, setspace, nicefrac, multicol, amsthm, geometry,endnotes}
\usepackage{fullpage}
\usepackage[utf8]{inputenc}
\usepackage{mathrsfs}
\usepackage{stmaryrd}

\newtheorem{thm}{Theorem}

\newtheorem*{ithm}{Intersection Theorem}
\newtheorem{lem}[thm]{Lemma}
\newtheorem{cor}[thm]{Corollary}

\theoremstyle{definition}

\newtheorem{quest}[thm]{Question}

\newcommand{\cs}{2^\omega}
\newcommand{\str}{2^{<\omega}}
\newcommand{\dom}{\mathrm{dom}}

\newcommand{\uh}{{\upharpoonright}}

\newcommand{\MLR}{\mathsf{MLR}}

  \newcommand{\K}{\mathcal{K}}

 \newcommand{\llb}{\llbracket}
\newcommand{\rrb}{\rrbracket}
\renewcommand{\dim}{\mathit{dim}}

\newcommand{\Tree}{\mathsf{Tree}}

\renewcommand{\K}{\mathcal{K}}
\renewcommand{\dim}{\mathit{dim}}

\usepackage{xcolor}	
	\usepackage{soul}

\title{The Intersection of Algorithmically Random Closed Sets and Effective Dimension}
\author{Adam Case and Christopher P. Porter}
\begin{document}

\begin{abstract}
In this article, we study several aspects of the intersections of algorithmically random closed sets.  First, we answer a question of Cenzer and Weber, showing that the operation of intersecting relatively random closed sets (with respect to certain underlying measures induced by Bernoulli measures on the space of codes of closed sets), which preserves randomness, can be inverted:  a random closed set of the appropriate type can be obtained as the intersection of two relatively random closed sets.   We then extend the Cenzer/Weber analysis to the intersection of multiple random closed sets, identifying the Bernoulli measures with respect to which the intersection of relatively random closed sets can be non-empty.  We lastly apply our analysis to provide a characterization of the effective Hausdorff dimension of sequences in terms of the degree of intersectability of random closed sets that contain them.
\end{abstract}

\maketitle 

\section{Introduction}

The goal of this article is twofold.  First, we extend work on Cenzer and Weber \cite{CenWeb13} concerning the intersection of algorithmically random closed subsets of $\cs$ to provide an analysis of multiple intersections of algorithmic random closed sets.  Second, we apply our results on multiple intersections to reveal a hitherto undetected relationship between what we call a degree of intersectability of a family of random closed sets and the effective Hausdorff dimension of members of these random closed sets.  In particular, we prove that for a family of random closed sets with respect to an underlying probability measure of a certain form (known as a symmetric Bernoulli measure, defined below), there is a fixed degree of intersectability of the random closed sets in this family, and this degree is inversely related to a lower bound on the effective Hausdorff dimension of members of these random closed sets.  Moreover, given any sequence $X\in\cs$ of positive effective Hausdorff dimension, any random closed set (with respect to the relevant underlying probability measure) that contains $X$ must have a degree of intersectability that is inversely proportional to the effective dimension of $X$.

The study of algorithmically random closed sets was initiated by Barmpalias, Brodhead, Cenzer, Dashti, and Weber in \cite{BarBroCen07}.   In this study, each closed subset of $\cs$ is coded as a member of $3^\omega$, where each value in the sequence is determined by the type of branching that occurs at each node of the underlying tree corresponding to the closed set in question (we discuss the coding mechanism in Section \ref{subsec-rcs} below). The standard machinery of algorithmically random sequences then directly transfers over to the setting of closed sets.   Subsequent work on this topic was carried out in, for instance, \cite{BroCenTos11},\cite{DiaKjo12}, and \cite{CenWeb13}, and more recently, \cite{Axo15}, \cite{CulPor16}, and \cite{Axo18}.

In follow-up to their initial work, Cenzer and Weber \cite{CenWeb13} studied the unions and intersections of random closed sets with respect to a general family of measures on the space of closed subsets of $\cs$ (we write this space as $\mathcal{K}(\cs)$).  Such measures are induced by Bernoulli measures on $3^\omega$:  for $p,q$ such that $0\leq p+q\leq 1$, we define the measure $\mu_{\langle p,q\rangle}$ to satisfy, for every $\sigma\in 3^{<\omega}$,
\begin{itemize}
\item $\mu_{\langle p,q\rangle}(\sigma 0)=p\cdot \mu_{\langle p,q\rangle}(\sigma)$,
\item $\mu_{\langle p,q\rangle}(\sigma 1)=q\cdot \mu_{\langle p,q\rangle}(\sigma)$, and
\item $\mu_{\langle p,q\rangle}(\sigma 2)=(1-p-q)\cdot \mu_{\langle p,q\rangle}(\sigma)$.
\end{itemize}
We write the measure on $\K(\cs)$ induced by $\mu_{\langle p,q\rangle}$ as $\mu_{\langle p,q\rangle}^*$ (where we follow the convention first laid out in \cite{BarBroCen07} that if $\mu$ is a measure on codes of closed subsets of $\cs$, then $\mu^*$ is the induced measure on $\K(\cs)$).

Of the results obtained by Cenzer and Weber in \cite{CenWeb13}, the most relevant to the present study is what we will refer to as the \emph{Intersection Theorem}, which provides a full characterization, in terms of the parameters of Bernoulli measures on $3^\omega$, of when the associated notions of random closed sets can yield non-empty intersections:

 \begin{ithm}[Cenzer/Weber \cite{CenWeb13}]\label{thm-cw1}
Suppose that $p, q, r, s\geq 0$, $0\leq p + q\leq 1$ and $0\leq r + s\leq 1$. Suppose that  $P\in \K(\cs)$ is $\mu^*_{\langle p,q\rangle}$-random relative to $Q\in \K(\cs)$ and that $Q$ is  $\mu^*_{\langle r,s\rangle}$-random relative to $P$.
\begin{enumerate}
\item If $p+q+r+s\geq1+pr+qs$, then $P\cap Q=\emptyset$.
\item If $p+q+r+s<1+pr+qs$, then $P\cap Q=\emptyset$ with probability $\dfrac{ps+qr}{(1-p-q)(1-r-s)}$.
\item If $p+q+r+s<1+pr+qs$ and $P\cap Q\neq\emptyset$, then $P\cap Q$ is Martin-L\"of random with respect to the measure $\mu^*_{\langle p+r-pr,q+s-qs \rangle}$.
\end{enumerate}
\end{ithm}

As a corollary of the Intersection Theorem, by setting $p=q=r=s$ (obtaining what we refer to as a \emph{symmetric Bernoulli measure} on $3^\omega$, which we write as $\mu_p$, with $\mu_p^*$ standing for the corresponding measure on $\K(\cs)$), Cenzer and Weber obtain:
\begin{cor}[Cenzer/Weber \cite{CenWeb13}]\label{cor-cw}
For $p\in(0,\frac{1}{2})$, let $P,Q\in\K(\cs)$ be relatively $\mu^*_p$-random.
\begin{enumerate}
\item If $p\geq 1-\frac{\sqrt{2}}{2}$, then $P\cap Q=\emptyset$.
\item If $p< 1-\frac{\sqrt{2}}{2}$, then $P\cap Q=\emptyset$ with probability $\frac{2p^2}{(1-2p)^2}$.
\item If $p< 1-\frac{\sqrt{2}}{2}$ and $P\cap Q\neq\emptyset$, then $P\cap Q$ is Martin-L\"of random with respect to the measure $\mu^*_{2p-p^2}$.
\end{enumerate}
\end{cor}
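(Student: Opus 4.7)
The plan is to recognize this corollary as the direct specialization of the Intersection Theorem obtained by setting $p = q = r = s$. With this choice of parameters, the measure $\mu_{\langle p, p\rangle}$ on $3^\omega$ assigns weight $p$ to each of the symbols $0$ and $1$ and weight $1 - 2p$ to $2$, which is precisely the symmetric Bernoulli measure $\mu_p$; hence $\mu^*_{\langle p, p\rangle} = \mu^*_p$. The hypothesis that $P, Q \in \K(\cs)$ are relatively $\mu^*_p$-random is then exactly the hypothesis of the Intersection Theorem, and it remains only to carry out the substitution in each of the three clauses.

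In clause (1), the inequality $p + q + r + s \geq 1 + pr + qs$ becomes $4p \geq 1 + 2p^2$, i.e., $2p^2 - 4p + 1 \leq 0$. I will solve this quadratic, obtaining roots $1 \pm \tfrac{\sqrt{2}}{2}$, and observe that on the range $p \in (0, \tfrac{1}{2})$ the inequality holds exactly when $p \geq 1 - \tfrac{\sqrt{2}}{2}$; this yields clause (1) and complementarily identifies the range in which clauses (2) and (3) apply. For clause (2), direct substitution gives $ps + qr = 2p^2$ and $(1 - p - q)(1 - r - s) = (1 - 2p)^2$, so the probability of empty intersection collapses to $\tfrac{2p^2}{(1 - 2p)^2}$. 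For clause (3), substitution gives $p + r - pr = q + s - qs = 2p - p^2$, so the induced parameters coincide and the resulting measure $\mu^*_{\langle 2p - p^2, 2p - p^2\rangle}$ is precisely the symmetric measure $\mu^*_{2p - p^2}$.

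There is no real obstacle here; the only non-mechanical step is the quadratic calculation in clause (1), where one must be careful to select the root that lies in the interval $(0, \tfrac{1}{2})$ over which the symmetric Bernoulli measures are being parametrized.
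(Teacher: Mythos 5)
Your proposal is correct and matches the paper's own derivation: the corollary is obtained exactly as you describe, by specializing the Intersection Theorem to $p=q=r=s$, noting $\mu^*_{\langle p,p\rangle}=\mu^*_p$, and reducing the inequality $p+q+r+s<1+pr+qs$ to $4p<1+2p^2$, whose relevant root is $1-\tfrac{\sqrt{2}}{2}$. The substitutions $ps+qr=2p^2$, $(1-p-q)(1-r-s)=(1-2p)^2$, and $p+r-pr=2p-p^2$ are likewise the ones the paper uses (and you even fix a small sign slip in the paper's displayed quadratic, which should read $2p^2-4p+1=0$).
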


In our analysis, we extend the work of Cenzer and Weber on the intersection of random closed sets in two respects.  First, Cenzer and Weber leave open whether a converse of the Intersection Theorem holds:

\begin{quest}\label{q2}
Suppose that $p, q, r, s\geq 0$ satisfy $0\leq p + q\leq 1$, $0\leq r + s\leq 1$ and $p+q+r+s < 1+pr+qs$ and $R$ is Martin-L\"of random with respect to the measure $\mu^*_{\langle p+r-pr,q+s-qs\rangle}$. Do there exist $P,Q\in \K(\cs)$ such that $R = P \cap Q
$, $P$ is $\mu^*_{\langle p,q\rangle}$-Martin-L\"of random and $Q$ is $\mu^*_{\langle r,s\rangle}$-Martin-L\"of random?
\end{quest}

Here we answer this question in the affirmative.   Our result makes use of an alternative characterization of $\mu^*_{\langle p,q\rangle}$-random closed sets in terms of Galton-Watson trees, generalizing a result of Kjos-Hanssen and Diamondstone \cite{DiaKjo12}.  We also use an approach similar to one due to Bienvenu, Hoyrup, and Shen \cite{BieHoyShe17}, who reprove the above result of Kjos-Hanssen and Diamondstone using the machinery of layerwise computability.

The second respect in which we extend Cenzer and Weber's work on the intersection of random closed sets pertains to multiple intersections of random closed sets.  Here we postpone the full statement of our result until more machinery has been developed, but the general idea is as follows:  From Corollary \ref{cor-cw}, we can conclude:
\begin{enumerate}
\item If $p<1-\frac{1}{\sqrt{2}}$, then relatively $\mu_p^*$-random closed sets may have a non-empty intersection.
\item If $p\geq 1-\frac{1}{\sqrt{2}}$, then relatively $\mu_p^*$-random closed sets must have an empty intersection.
\end{enumerate}
We extend this result by showing, for $n\geq 2$, the following:
\begin{enumerate}
\item If $p<1-\frac{1}{\sqrt[n]{2}}$, then $n$ mutually $\mu_p^*$-random closed sets may have a non-empty intersection.
\item If $p\geq 1-\frac{1}{\sqrt[n]{2}}$, then $n$ mutually $\mu_p^*$-random closed sets must have an empty intersection.
\end{enumerate}
Here, a sequence of closed sets is \emph{mutually $\mu_p^*$-random} if the code for each closed set in the sequence is $\mu_p$-random relative to the join of the codes of the remaining closed sets in the sequence.   We also answer the analogue of Question \ref{q2} for the intersection of $n$ random closed sets in the more general case that $n\geq 2$.  

Lastly, we apply our result on multiple intersections to obtain a new characterization of the effective dimension of members of random closed sets.  To do so, we draw on work of Diamondstone and Kjos-Hanssen on the effective Hausdorff dimension of members of random closed sets.  In particular, from results of Diamondstone and Kjos-Hanssen we can immediately conclude:
\begin{itemize}
\item[(1)] the dimension spectrum of members of $\mu^*_p$-random closed sets is $[-\log(1-p),1]$; and
\item[(2)] in the case that $p=1-\frac{1}{\sqrt[n]{2}}$, this dimension spectrum evaluates to $[\frac{1}{n},1]$.
\end{itemize}
Combining the observations with our results on multiple intersections, we can show:
\begin{itemize}
\item[(3)] the lower bound on the dimension spectrum of a family of random closed sets is inversely proportional to an upper bound on the number of mutually random closed sets that can have a non-empty intersection; and
\item[(4)] the effective dimension of a sequence is inversely proportional to the degree of intersectability of any random closed set containing it, where this degree of intersectability measures the number of mutually random closed sets of a given type that can have a non-empty intersection.
\end{itemize}
(More precise statements of these results can be found in Section \ref{sec-dimension}.)

The outline of the remainder of this paper is as follows.  First, we cover the necessary background in Section \ref{sec-bg}.  Next, Section \ref{sec-intersection} contains a proof of the converse of the Intersection Theorem (as well as a new proof of the Intersection Theorem that enables us to prove the converse).  In Section \ref{sec-multiple}, we turn to multiple intersections of random closed sets, establishing analogues of the Intersection Theorem and its converse for the intersection of any finite number of sufficiently random closed sets.  Lastly, we conclude in Section \ref{sec-dimension} with a discussion of the relationship between effective dimension and multiple intersections of random closed sets.

\section{Background}\label{sec-bg}

\subsection{Some topological and measure-theoretic basics}

As we will work with binary, ternary, and quaternary sequences in this study, we will introduce the spaces of such sequences in more generality.  For $n\in\omega$, we will write the set of all finite strings
over the alphabet $\{0,1,\ldots n-1\}$ as $n^{<\omega}$.  We use $\epsilon$ to stand for the empty string. Similarly, the space of all infinite sequences
over the alphabet $\{0,1,\ldots n-1\}$ is written $n^\omega$. For $x,y\in n^\omega$, $x\oplus y$ is the sequence $z\in n^\omega$ satisfying $z(2k)=x(k)$ and $z(2k+1)=y(k)$ for every $k \in\omega$. We similarly define $\sigma \oplus \tau$ for $\sigma,\tau \in n^{< \omega}$ where $|\sigma| = |\tau|$.

We will work with the topology on $n^{\omega}$ generated by the
clopen sets 
\[
\llb\sigma\rrb= \{x \in n^{\omega}: x\succ \sigma\},
\] where
$\sigma \in n^{<\omega}$ and $x \succ \sigma$ means that $\sigma$ is
an initial segment of $x$. For $x\in n^\omega$ and $k\in\omega$, $x\restriction k$ 
stands for the initial segment of $x$ of length $k$. For $\sigma,\tau\in n^{<\omega}$,  the concatenation
of $\sigma$ and $\tau$ will be written as $\sigma^{\frown}\tau$ or, in some cases, as $\sigma\tau$.

We say that $T\subseteq n^{<\omega}$ is a \emph{tree} if, whenever $\tau \in T$ and $\sigma \preceq \tau$, we have  $\sigma
\in T$. A \emph{path}
through a tree $T\subseteq n^{<\omega}$ is a sequence $x \in n^{\omega}$
satisfying $x \restriction k \in T$ for every $k$. The set of all
paths through a tree $T$ is denoted by $[T]$. Recall that a set $C\subseteq n^{\omega}$ is closed if and only if $C=[T]$ for
  some tree $T\subseteq n^{<\omega}$. Moreover, $C$ is non-empty if and
  only if $T$ is infinite.

A \emph{measure} $\mu$ on $n^{\omega}$ is a function that assigns to
each Borel subset of $n^{\omega}$ a value in 
$[0,1]$ and satisfies the condition $\mu(\bigcup_{i \in \omega} B_{i})= \sum_{i \in
  \omega} \mu(B_{i})$ for any pairwise disjoint sequence $(B_i)_{i\in\omega}$ of Borel sets.
By Carath\'{e}odory's extension theorem, the conditions
\begin{itemize}
  \item[(i)] $\mu(n^\omega)= 1$ and
  \item[(ii)] $\mu(\llb\sigma\rrb) = \mu(\llb\sigma0\rrb) +
  \mu(\llb\sigma1\rrb) + \ldots + \mu(\llb\sigma^{\frown}(n-1)\rrb) $ for
  all $\sigma \in n^{<\omega}$
\end{itemize}
uniquely determine a measure on $n^{\omega}$.  We often identify a measure with
a function $\mu\colon n^{<\omega} \to [0,1]$
satisfying the conditions (i) and (ii).   For each $\sigma\in n^{<\omega}$, we often write $\mu(\sigma)$
instead of $\mu(\llb\sigma\rrb)$.  The Lebesgue measure $\lambda$
on $n^\omega$ is defined by $\lambda(\sigma) = n^{-|\sigma|}$ for each string
$\sigma\in n^{<\omega}$.

Given a measure $\mu$ on $n^\omega$ and $\sigma,\tau\in n^{<\omega}$, the conditional measure
$\mu(\sigma\tau\mid\sigma)$ is defined by setting
\[
\mu(\sigma\tau\mid\sigma)=\dfrac{\mu(\sigma\tau)}{\mu(\sigma)}.
\]

\subsection{Some computability theory}
\label{sec:some-comp-theory}

We assume the reader is familiar with the basic concepts of computability
theory as found, for instance, in the early chapters of \cite{Soa16}.

A $\Sigma^0_1$ \emph{class} $S\subseteq n^\omega$ is an effectively 
open set, i.e., an effective union of basic clopen subsets of $n^\omega$.
$P\subseteq n^\omega$ is a $\Pi^0_1$ \emph{class} if $\cs\setminus P$ is a 
$\Sigma^0_1$ class.  For $n,m\in\omega$, a Turing functional $\Phi\colon \subseteq n^{\omega} \to m^{\omega}$ is defined
in terms of a computably enumerable set of pairs $S_\Phi\subseteq n^{<\omega}\times m^{<\omega}$
with the condition that if $(\sigma,\tau),(\sigma',\tau')\in S_\Phi$ and $\sigma\preceq\sigma'$, then
either $\tau\preceq\tau'$ or $\tau'\preceq\tau$.  For each $\sigma\in n^{<\omega}$, we define $\Phi^\sigma$ to be the maximal string  in $\{\tau\in m^{<\omega}: (\exists \sigma'\preceq\sigma)((\sigma',\tau)\in\Phi)\}$ in the order given by $\preceq$. To obtain a map defined on $n^\omega$ from the c.e.\ set of pairs $S_\Phi$, for each $x\in n^\omega$, we let $\Phi^x$ be the maximal $y\in m^{<\omega}\cup m^\omega$ in the order given by $\preceq$  such that $\Phi^{x \uh k}$ is a prefix of~$y$ for all~$k\in\omega$. We will thus set
$\dom(\Phi)=\{x\in n^\omega:\Phi^x\in m^\omega\}$.  When $\Phi^x\in m^\omega$, we will sometimes write $\Phi^x$ as $\Phi(x)$ to emphasize the functional $\Phi$ as a map from~$n^\omega$ to~$m^\omega$.  It is straightforward to relativize the notion 
of a Turing functional $\Phi\colon \subseteq n^{\omega} \to m^{\omega}$
to any oracle $z\in\cs$ to obtain a $z$-\emph{computable} functional.

A measure $\mu$ on $n^\omega$ is computable if $\mu(\sigma)$ 
is a computable real number, uniformly in $\sigma\in n^{<\omega}$. 
Clearly, the Lebesgue measure $\lambda$ on $n^\omega$ is computable. If $\mu$ is a computable measure on $n^\omega$ and 
$\Phi\colon \subseteq n^{\omega} \to m^{\omega}$ is a Turing functional
defined on a set of $\mu$-measure one, then the \emph{pushforward measure}
$\mu_\Phi$ defined by
\[
\mu_\Phi(\sigma)=\mu(\Phi^{-1}(\llb \sigma \rrb)),
\]
for each $\sigma\in m^{<\omega}$, is a computable measure.

\subsection{Algorithmically random sequences}  \label{subsec-mlr}

In this section, we lay out the main definitions of algorithmic randomness with which we will be working.
For more details, see \cite{Nie09}, \cite{DowHir10}, or \cite{SheUspVer17}.  See also \cite{FraPor20} for an up-to-date survey on algorithmic randomness.

  Let $\mu$ be a computable measure on $n^{\omega}$ and let $z\in  m^{\omega}$. 
Recall that a \emph{$\mu$-Martin-L\"of test relative to $z$} (or simply a \emph{$\mu$-test relative to $z$}) is a uniformly $\Sigma^{0,z}_{1}$ sequence 
   $(U_{i})_{i\in\omega}$ of subsets  of $n^{\omega}$  with $\mu (U_{n}) \leq 2^{-n}$.  Then $x\in n^\omega$ passes such a test $(U_{i})_{i\in\omega}$ if $x\notin \bigcap_{n} U_{n}$, and
$x\in n^\omega$ is $\mu$-Martin-L\"of random relative to $z$ if $x$ passes every $\mu$-Martin-L\"of test relative to $z$. The collection 
  of $\mu$-random sequences relative to $z$ will be denoted by $\MLR^{z}_{\mu}$.  When $z$ is computable we 
  simply write $\MLR_{\mu}$ and will refer to $x$ as $\mu$-random.

It is not difficult to see that if $\mu$ is a computable measure on $n^{\omega}$ and $\Phi\colon
  \subseteq n^{\omega} \to m^{\omega}$ is a Turing functional that satisfies $\mu(\dom
  (\Phi))=1$, then $\MLR_{\mu}\subseteq \dom(\Phi)$.  One of the central tools that we will use in this study is the following.  

\begin{thm}\label{prnr}
  Let $\Phi\colon \subseteq n^\omega \to m^\omega$ be a Turing functional that satisfies $\mu
  (\dom(\Phi))=1$.
  \begin{enumerate}
    \item (Preservation of Randomness \cite{ZvoLev70}) If $x \in \MLR_{\mu}$ then $\Phi(x) \in \MLR_{\mu_\Phi}$.
    \item (No Randomness from Non-Randomness \cite{She86}) If $y \in \MLR_{\mu_\Phi}$, then there is $x\in
    \MLR_{\mu}$ such that $\Phi(x)=y$.
  \end{enumerate}
  \label{thm:PoR-and-NReN}
\end{thm}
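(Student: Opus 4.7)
My plan is to prove both parts via the contrapositive, leveraging the defining identity $\mu_\Phi(S) = \mu(\Phi^{-1}(S))$ in the two opposite directions.

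For \textbf{(1) Preservation of Randomness}, suppose $\Phi(x) \notin \MLR_{\mu_\Phi}$, witnessed by a $\mu_\Phi$-Martin-L\"of test $(V_n)_{n \in \omega}$ with $\Phi(x) \in \bigcap_n V_n$. Set $U_n = \Phi^{-1}(V_n) \cap \dom(\Phi)$. Since $\Phi$ is a computable Turing functional, the preimage of a uniformly $\Sigma^0_1$ sequence is uniformly $\Sigma^0_1$, and the pushforward definition gives $\mu(U_n) = \mu(\Phi^{-1}(V_n)) = \mu_\Phi(V_n) \le 2^{-n}$, where the assumption $\mu(\dom\Phi) = 1$ lets us ignore the intersection with $\dom\Phi$ in the measure computation. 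So $(U_n)$ is a $\mu$-Martin-L\"of test that $x$ fails, contradicting $x \in \MLR_\mu$. This direction is essentially a bookkeeping exercise.

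For \textbf{(2) No Randomness from Non-Randomness}, the contrapositive asks us to convert a universal $\mu$-Martin-L\"of test $(U_n)_{n \in \omega}$ that covers all of $\Phi^{-1}(y)$ into a $\mu_\Phi$-Martin-L\"of test covering $y$. Here a naive pullback fails, so instead I would project each $U_n$ down to $m^\omega$ by selecting cylinders $\llb\tau\rrb$ on which $\Phi^{-1}(\llb\tau\rrb)$ is almost entirely absorbed into $U_n$. Concretely, set
\[
V_{n,k} = \bigcup \left\{ \llb\tau\rrb : \tau \in m^{<\omega},\; \mu\bigl(\Phi^{-1}(\llb\tau\rrb) \cap U_n\bigr) > (1 - 2^{-k})\, \mu_\Phi(\llb\tau\rrb) > 0 \right\}.
\]
This is uniformly $\Sigma^0_1$ because $\mu(\Phi^{-1}(\llb\tau\rrb) \cap U_n)$ is lower-semicomputable in $n,\tau$ while $\mu_\Phi(\llb\tau\rrb)$ is computable; summing over the disjoint selected cylinders gives the measure bound $\mu_\Phi(V_{n,k}) \le \mu(U_n)/(1-2^{-k})$, so a suitable diagonal $(V_{n, n+c})_{n \in \omega}$ is a genuine $\mu_\Phi$-Martin-L\"of test. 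If $\Phi^{-1}(y) \subseteq \bigcap_n U_n$, then a Lebesgue-differentiation argument along the filtration $(\llb y \uh s\rrb)_s$ forces $y$ into each $V_{n,k}$, contradicting $y \in \MLR_{\mu_\Phi}$.

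The hard step is the projection argument in (2): both verifying that the sets $V_{n,k}$ have summably small $\mu_\Phi$-measure and checking that $y$ actually lands inside them require controlling how $\mu$-mass on the fibers $\Phi^{-1}(\llb y \uh s \rrb)$ behaves in the limit $s \to \infty$, since individual fibers $\Phi^{-1}(y)$ typically have $\mu$-measure zero. Part (1) is immediate from the preimage identity; part (2) carries the genuine content. An alternative and perhaps conceptually cleaner route, which the paper signals it will use in Section \ref{sec-intersection}, is the layerwise-computability framework of Bienvenu--Hoyrup--Shen, in which $\Phi$ induces a morphism on the randomness layers directly and the desired random preimage is extracted without an explicit test construction.
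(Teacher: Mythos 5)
First, note that the paper does not prove this theorem at all: it is quoted as background with citations to Zvonkin--Levin and Shen, so your attempt can only be measured against the standard proofs in the literature. Your part (1) is fine and is the standard argument; the only cosmetic point is that $\Phi^{-1}(V_n)\cap\dom(\Phi)$ is not literally $\Sigma^0_1$ (the domain is typically $\Pi^0_2$), so one should instead take the natural $\Sigma^0_1$ pullback $\{x : \Phi^x \text{ has a prefix enumerated into } V_n\}$, which agrees with $\Phi^{-1}(V_n)$ up to a $\mu$-null set; your measure computation is then exactly right. In part (2), the construction of $V_{n,k}$, its uniform effectiveness, and the bound $\mu_\Phi(V_{n,k})\le \mu(U_n)/(1-2^{-k})$ (summing over a prefix-free set of minimal selected $\tau$, whose $\Phi$-preimages are pairwise disjoint inside $U_n$) are all correct. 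The genuine gap is the coverage step. What you must show is that for the specific random $y$ with $\Phi^{-1}(y)\subseteq U_n$ there is some $s$ with $\mu(\Phi^{-1}(\llb y\uh s\rrb)\setminus U_n) < 2^{-k}\mu_\Phi(\llb y\uh s\rrb)$. Continuity from above gives only that the numerator tends to $0$; the denominator $\mu_\Phi(\llb y\uh s\rrb)$ typically tends to $0$ as well, so you need a ratio statement at the particular point $y$. ``Lebesgue differentiation along the filtration'' only yields $\mu_\Phi$-a.e.\ convergence, and its effective pointwise version at Martin-L\"of random points is precisely the kind of statement that is known to fail in general (lower densities of $\Pi^0_1$ classes at ML-random points need not be $1$; density randomness is strictly stronger than ML-randomness). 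So as written, the key step is unjustified.

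The standard repair uses the universality of $(U_n)$ much more strongly than you do. Since $(U_n)$ is universal, every non-random lies in every $U_n$, so $K_n := n^\omega\setminus U_n$ is a $\Pi^0_1$ (hence effectively compact) class consisting entirely of $\mu$-randoms, and therefore $K_n\subseteq\dom(\Phi)$. By effective compactness, $\Phi\uh K_n$ is effectively continuous, so $\Phi(K_n)$ is uniformly $\Pi^0_1$ in $m^\omega$ and $\mu_\Phi(\Phi(K_n))\ge\mu(K_n)\ge 1-2^{-n}$; the complements $m^\omega\setminus\Phi(K_n)$ then form a $\mu_\Phi$-test, and any $\mu_\Phi$-random $y$ lies in some $\Phi(K_n)$, which already hands you a random preimage with no projection sets $V_{n,k}$ needed. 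Alternatively, this observation is exactly what rescues your construction: if $y$ had no random preimage then $y\notin\Phi(K_n)$ for every $n$, and since $\Phi(K_n)$ is closed, some cylinder $\llb y\uh s\rrb$ misses it entirely, giving $\mu(\Phi^{-1}(\llb y\uh s\rrb)\cap U_n)=\mu_\Phi(\llb y\uh s\rrb)>0$ and hence $y\in V_{n,k}$ -- a topological argument, not a differentiation one. Your closing remark about the layerwise-computability route of Bienvenu--Hoyrup--Shen is apt; that is indeed the machinery the paper invokes later (Proposition 5.3.1 of Hoyrup--Rojas), and it packages exactly this compactness-on-layers idea.
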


Lastly, the following result, known as Van Lambalgen's theorem, will be useful to us (we state the result only for $3^\omega$).  Given measures $\mu$ and $\nu$ on $3^\omega$, we will write $\mu\oplus\nu$ as the measure on $3^\omega$ defined by the following condition: for any string of the form $\sigma\oplus\tau$ for $\sigma, \tau\in 3^{< \omega}$ with $|\sigma|=|\tau|$, $(\mu\oplus\nu)(\sigma\oplus\tau)=\mu(\sigma)\nu(\tau)$.

\begin{thm}[\cite{Van90}]\label{thm-vlt}
Let $\mu$ and $\nu$ be computable measures on $3^\omega$.  Then for $x\oplus y\in 3^\omega$,
$x\oplus y\in\MLR_{\mu\oplus\nu}$ if and only if $x\in\MLR_\mu^y$ and $y\in\MLR_\nu$.
\end{thm}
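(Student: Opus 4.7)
The plan is to prove both implications separately, using the standard effective Fubini argument. Since $\mu$ and $\nu$ are computable, so is $\mu\oplus\nu$, and for any $\Sigma^0_1$ set $U\subseteq 3^\omega$ the section $U_z:=\{w:w\oplus z\in U\}$ is uniformly $\Sigma^{0,z}_1$, with $\mu(U_z)$ lower semicomputable uniformly in $z$ and $\int\mu(U_z)\,d\nu(z)=(\mu\oplus\nu)(U)$.

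For the reverse direction, suppose $x\oplus y\notin\MLR_{\mu\oplus\nu}$, witnessed by a $\mu\oplus\nu$-test $(U_n)_{n\in\omega}$. I would define
\[
B_n:=\{z\in 3^\omega:\mu((U_{2n})_z)>2^{-n}\}.
\]
By Markov's inequality, $\nu(B_n)<2^{-n}$, and $(B_n)$ is uniformly $\Sigma^0_1$ (using lower semicomputability of $\mu((U_{2n})_z)$ in $z$), hence is a $\nu$-test. If $y\in\MLR_\nu$, then $y\notin B_n$ for some $n$, and in fact for all sufficiently large $n$ after reindexing. Then $(\,(U_{2n})_y\,)_{n}$ is a uniform $\Sigma^{0,y}_1$ sequence with $\mu$-measure at most $2^{-n}$, i.e.\ a $\mu$-test relative to $y$, and since $x\oplus y\in\bigcap U_{2n}$ we have $x\in(U_{2n})_y$ for every $n$. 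Thus $x\notin\MLR^y_\mu$, completing the contrapositive.

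For the forward direction, I would argue two contrapositives. First, if $y\notin\MLR_\nu$ with test $(U_n)$, then $V_n:=3^\omega\times U_n$ (viewed as a subset of the interleaved space) has $(\mu\oplus\nu)(V_n)\leq 2^{-n}$ and captures $x\oplus y$, so $x\oplus y\notin\MLR_{\mu\oplus\nu}$. Second, if $x\notin\MLR^y_\mu$, let $(W^y_n)$ be a $\mu$-test relative to $y$ capturing $x$. By the standard trick of freezing the enumeration of $W^z_n$ at each stage whenever the current $\mu$-measure approximation would exceed $2^{-n}$, I may assume $\mu(W^z_n)\leq 2^{-n}$ for \emph{all} oracles $z$, without changing $W^y_n$. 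Setting $V_n:=\{w\oplus z:w\in W^z_n\}$ yields a uniformly $\Sigma^0_1$ set with
\[
(\mu\oplus\nu)(V_n)=\int\mu(W^z_n)\,d\nu(z)\leq 2^{-n},
\]
and $x\oplus y\in\bigcap V_n$, so again $x\oplus y\notin\MLR_{\mu\oplus\nu}$.

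The main obstacle to a clean write-up is the effective Fubini bookkeeping: verifying that the section-measure function $z\mapsto\mu(U_z)$ is lower semicomputable uniformly in indices for $\Sigma^0_1$ classes, and that the truncation trick in the forward direction preserves the original test at $y$ while enforcing a uniform measure bound. Both facts are routine given computability of the base measures, but they are the only nontrivial computability steps; everything else is the classical Fubini/Markov argument transcribed into the effective setting.
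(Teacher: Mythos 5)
The paper itself offers no proof of this statement (it simply cites van Lambalgen), so your argument stands on its own; your strategy, effective Fubini plus Markov's inequality with both directions handled contrapositively, is the standard one. The forward direction is correct modulo the truncation bookkeeping you yourself flag; note only that with a computable measure you cannot decide when the lower approximation of $\mu(W^z_n)$ ``would exceed'' $2^{-n}$, so the usual fix is to stop enumerating once the approximation is seen to exceed $2^{-n}$, accept the bound $2^{-n+1}$ for all oracles, and shift indices by one. The section-measure facts you defer (lower semicomputability of $z\mapsto\mu(U_z)$ and $\int\mu(U_z)\,d\nu(z)=(\mu\oplus\nu)(U)$) are indeed routine for computable $\mu,\nu$.

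The genuine gap is in the reverse direction, at the sentence ``If $y\in\MLR_\nu$, then $y\notin B_n$ for some $n$, and in fact for all sufficiently large $n$ after reindexing.'' Passing the Martin-L\"of test $(B_n)_{n\in\omega}$ only gives $y\notin\bigcap_n B_n$, i.e.\ $y\notin B_n$ for at least one $n$; the $B_n$ are not nested, so no reindexing upgrades this to a cofinite tail, and a single $n$ is useless: it places $x$ in one small $\Sigma^{0,y}_1$ set rather than in every level of a test. What the argument needs here is the Solovay-test characterization of randomness: since $\sum_n\nu(B_n)<\infty$ and the $B_n$ are uniformly $\Sigma^0_1$, every $\nu$-Martin-L\"of random $y$ lies in only finitely many $B_n$ (the usual proof that Martin-L\"of tests and Solovay tests are equivalent goes through verbatim for computable measures). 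Alternatively, apply Levin's integral-test criterion to $g(z)=\sum_n\mu((U_n)_z)$, which is lower semicomputable with $\int g\,d\nu<\infty$, so $g(y)<\infty$ for $\nu$-random $y$. With either lemma, you get $N$ such that $\mu((U_{2n})_y)\leq 2^{-n}$ for all $n\geq N$, and then $((U_{2n})_y)_{n\geq N}$, reindexed, is a $\mu$-test relative to $y$ capturing $x$, which completes the contrapositive. As written, without one of these standard facts, that step is unjustified and the reverse direction does not go through.
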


\subsection{Dimensions of Sequences}

Originally, Lutz defined the \emph{dimension} $\dim(x)$ of a sequence $x \in n^{\omega}$ using a generalized notion of a martingale, called a \emph{gale} \cite{jLutz03a}. This notion of dimension can also be extended to individual points in Euclidean space, and various connections between the dimensions of points and classical Hausdorff dimension have been established. For example, it was shown by Hitchcock \cite{jHitc05} that, for any set $E \subseteq \mathbb{R}^n$ that is a union of $\Pi^0_1$ sets,
\[
\dim_H(E) = \displaystyle\sup_{x \in E}\dim(x),
\]
where $\dim_H(E)$ is the classical Hausdorff dimension of $E$.  Another point-wise characterization of Hausdorff dimension was proven by Lutz and Lutz \cite{jLutz18} and states that, for any set $E \subseteq \mathbb{R}^n$,
\[
\dim_H(E) = \displaystyle\min_{A \subseteq \mathbb{N}} \displaystyle\sup_{x \in E} \dim^A(x),
\]
where $\dim^A(x)$ is the dimension of point $x \in \mathbb{R}^n$ relative to an oracle $A \subseteq \mathbb{N}$. Mayordomo showed that the dimensions of sequences can be characterized using Kolmogorov complexity \cite{jMayo02}, which we briefly discuss below.

A Turing machine $U$ is \emph{universal} if, for all Turing machines $M$ there exists a string $\sigma_M \in 2^{<\omega}$ such that, for all strings $\pi \in 2^{<\omega}$, $U(\langle \sigma_M, \pi \rangle) = M(\pi)$, where $\langle \cdot,\cdot \rangle$ is some string pairing function, e.g., $\langle \sigma,\tau \rangle = 0^{|\sigma|}1\sigma\tau$.

Fix a universal Turing machine $U$. The \emph{Kolmogorov complexity} of a string $\sigma \in n^{<\omega}$ is
\[
C(\sigma) = \min\{|\pi| \, \colon \pi \in 2^{<\omega} \text{ and } U(\pi) = \sigma\}.
\]
There are several ``flavors'' of Kolmogorov complexity. The one described above is referred to as the \emph{plain} Kolmogorov complexity of a string. However, other variants exist such as the \emph{prefix-free} Kolmogorov complexity, which restricts the domain of the Turing machines (including the universal Turing machine) to a prefix-free set. For a detailed discussion on Kolmogorov complexity, see \cite{bLiVit08}. 

The \emph{dimension} of a sequence $x \in 2^{\omega}$ is defined by
\[
\dim(x) = \displaystyle\liminf_{r \rightarrow \infty}\frac{C(x \upharpoonright r)}{r}.
\]
It should be noted that any variation of Kolmogorov complexity can be used in the definition of the dimension of a sequence.

For all sequences $x \in 2^{\omega}$, $0 \leq \dim(x) \leq 1$. If $x \in 2^{\omega}$ is computable, then $\dim(x) = 0$, and if $x$ is algorithmically random, then $dim(x) = 1$. However, there exist sequences with dimension 1 that are not algorithmically random. For any $\alpha \in [0,1]$, there exists a sequence $x \in 2^{\omega}$ such that $\dim(x) = \alpha$ \cite{jLutz03a}.

\subsection{Algorithmically random closed subsets of $\cs$}\label{subsec-rcs}

Recall that $\K(\cs)$ is the collection of all non-empty closed subsets of
$\cs$. Equivalently, these are
the sets of paths through infinite binary trees.  Following \cite{BarBroCen07}, we will code infinite trees by members of $3^{\omega}$.  
Given $x\in 3^{\omega}$, define a tree $T_{x}\subseteq \str$
inductively as follows. First $\epsilon$, the empty string, is
automatically in $T_{x}$. Now suppose $\sigma\in T_{x}$ is the $(i+1)$-st extendible node in $T_x$. Then
\begin{itemize}
  \item ${\sigma}^{\frown} 0 \in T_{x}$ and ${\sigma}^{\frown} 1
  \notin T_{x}$ if $x(i)=0$;
  \item ${\sigma}^{\frown} 0 \notin T_{x}$ and ${\sigma}^{\frown}
  1 \in T_{x}$ if $x(i)=1$;
  \item ${\sigma}^{\frown} 0 \in T_{x}$ and ${\sigma}^{\frown} 1
  \in T_{x}$ if $x(i)=2$.
\end{itemize}
Under this coding $T_{x}$ has no dead ends and hence is always
infinite. Note that every tree without dead ends can be coded by some
$x\in3^\omega$.  We will thus write $\Theta\colon 3^\omega\rightarrow \K(\cs)$ as the map that sends each $x\in 3^\omega$ to the closed set that it codes.

Given a measure $\mu$ on $3^\omega$, we set $\mu^*$ to be the measure on $\K(\cs)$ induced by $\mu$ and $\Theta$, i.e., 
\[
\mu^*(U):= \mu_\Theta(U)=\mu(\Theta^{-1}(U)).
\]
As noted in the introduction, we are particularly interested in certain Bernoulli measures on $3^\omega$.  For $p,q\geq 0$ satisfying $p+q\leq 1$, $\mu_{\langle p,q\rangle}$ is the Bernoulli measure on $3^\omega$ defined by setting, for each $\sigma\in 3^{<\omega}$, 
\begin{itemize}
\item $\mu_{\langle p,q\rangle}(\sigma0\mid\sigma)=p$,
\item $\mu_{\langle p,q\rangle}(\sigma1\mid\sigma)=q$, and
\item $\mu_{\langle p,q\rangle}(\sigma2\mid\sigma)=1-p-q$.
\end{itemize}
In the case that $p=q$, we will write $\mu_{\langle p,q\rangle}$ as $\mu_p$.  We will refer to $\mu_p$ as a \emph{symmetric} Bernoulli measure (as the probabilities of the occurrence of a single branch in the corresponding closed set are equal).

Lastly, for any computable measure $\mu$ on $3^\omega$, we can define a non-empty closed set $C \in \K(\cs)$ to be $\mu^*$-Martin-L\"of random  if $C=[T_{x}]$ for some $x \in \MLR_{\mu}$.

\section{The Converse of the Intersection Theorem}\label{sec-intersection}

In this section, we prove the converse of Theorem \ref{thm-cw1}, thereby answering Question \ref{q2}.  To do so, we provide an alternative proof of Theorem \ref{thm-cw1} in terms of effective Galton-Watson trees, an approach introduced by Diamondstone and Kjos-Hanssen \cite{DiaKjo12} that yields an alternative characterization of random closed sets.

\subsection{Effective Galton-Watson Trees with Two Survival Parameters}\label{subsec-gw}

The idea behind Galton-Watson trees is straightforward:  in the cases considered by Kjos-Hanssen and Diamondstone, we fix some parameter $\beta$, called the \emph{survival parameter}, and we prune $\str$ node by node, leaving a node $\sigma\in\str$ with probability $\beta$, in which case we say that $\sigma$ \emph{survives} (and otherwise we remove it). Note that if $\sigma$ survives, this does not guarantee that infinitely many extensions of $\sigma$ will survive.  As shown by Kjos-Hanssen and Diamondstone, once we have finished pruning $\str$, in the case that we do not have a finite tree, the set of infinite paths through the pruned tree forms a random closed set.  Bienvenu, Hoyrup, and Shen later provided a streamlined approach of the equivalence of these approaches for the case $p=\frac{1}{3}$ (using the machinery of layerwise computability), which can be straightforwardly generalized to arbitrary computable $p$.

In the case of $\mu^*_p$-random closed sets for some $p\in[0,\frac{1}{2}]$, the corresponding Galton-Watson tree that induces the same class of random closed sets is given by using the survival parameter $\beta=1-p$. However, for the case that we consider here, in which the Bernoulli measures on $3^\omega$ need not be symmetric, we need to work with two different survival parameters.  For $i\in\{0,1\}$, we let $\beta_i$ be the probability of survival for any string $\sigma$ that ends with the bit $i$.  As we will see, in the case of the measure $\mu^*_{\langle p,q\rangle}$, we set $\beta_0=1-q$ and $\beta_1=1-p$.  
Due to the condition that $0\leq p+q\leq 1$, we will only consider $\beta_0$ and $\beta_1$ satisfying $1\leq\beta_0+\beta_1\leq 2$.

As an alternative to representing a random closed set in terms of a code for the underlying binary tree with no dead ends, we will represent such a random closed set in terms of the code for a Galton-Watson tree, an approach first used in \cite{CulPor16}.  Whereas the former codes are sequences in $3^\omega$\!, the latter codes will be given by a sequence in $4^{\omega}$\!, where 0s, 1s, and 2s function as they do in the original coding and a $3$ at a given node indicates that the tree is dead above that node. That is, given $x\in
4^{\omega}$\!, we define a tree $S_{x}\subseteq \str$ inductively as
follows. First $\epsilon$, the empty string, is  included in
$S_{x}$ by default. Now suppose that $\sigma\in S_{x}$ is the $(i+1)$-st
surviving node in $S_x$ (i.e., we have yet to determine which, if any, extensions of $\sigma$ are in $S_x$). Then
\begin{itemize}
  \item ${\sigma}0 \in S_{x}$ and ${\sigma}1
  \notin S_{x}$ if $x(i)=0$;
  \item ${\sigma}0 \notin S_{x}$ and ${\sigma}
  1 \in S_{x}$ if $x(i)=1$;
  \item ${\sigma}0 \in S_{x}$ and ${\sigma}1
  \in S_{x}$ if $x(i)=2$;
  \item ${\sigma}0 \notin S_{x}$ and ${\sigma}
  1 \notin S_{x}$ if $x(i)=3$.
\end{itemize}

The above four possibilities correspond to the outcomes of a Galton-Watson tree, where for each non-empty $\sigma\in\str$ we randomly remove $\sigma$ from $\str$ (independently of the other $\tau\in\str$).  The set of infinite paths through the resulting random tree is thus a random closed set.  In fact, as shown by Diamondstone and Kjos-Hanssen, if each edge is removed with probability 
$p$, the resulting distribution on the collection of closed sets is the same as the one given by the measure $\mu^*_p$, with one exception:  the former distribution also includes the empty set as an atom (that is, $\{\emptyset\}$ is given positive measure by the resulting measure on $\K(\cs)$), as there is a non-zero probability that the process of removing edges will produce a finite tree.


We represent this process by a measure as follows:  Let $\nu$ be the measure on $4^{\omega}$ induced by setting, for
each $\sigma\in 4^{<\omega}$\!,
\begin{itemize}
\item $\nu(\sigma0\mid\sigma)=a_0=\beta_0(1-\beta_1)$,
\item $\nu(\sigma1\mid\sigma)=a_1=\beta_1(1-\beta_0)$,
\item $\nu(\sigma2\mid\sigma)=a_2=\beta_0 \beta_1$,
\item $\nu(\sigma3\mid\sigma)=a_3=(1-\beta_0)(1-\beta_1)$.
\end{itemize}
We will refer to $\nu$ as the measure on $4^\omega$ given by survival parameters $(\beta_0,\beta_1)$.

$\nu$ induces a measure on $\Tree$, the space of binary trees.  In this case, the probability of extending
a string in a tree by only $0$ is $a_0$, by only $1$ is $a_1$, by both
$0$ and $1$ is $a_2$, and by neither is $a_3$. Let us say that a tree $T$ is
\emph{GW$(\beta_0,\beta_1)$-random} if it has a $\nu$-Martin-L\"of random code, where $\nu$ is the measure on $4^\omega$ given by survival parameters $(\beta_0,\beta_1)$, i.e.\ if there is some $x\in
\MLR_\nu$ such that $T=S_{x}$.  The result relating GW($\beta_0,\beta_1$)-random trees and random closed sets is the following:

\begin{thm}\label{gwiffrand}
For $\beta_0,\beta_1\in(0,1)$ satisfying $\beta_0+\beta_1\geq 1$, a closed set $C$ is the set of paths through an infinite GW$(\beta_0,\beta_1)$-random tree if and only if it is a $\mu^*_{\langle 1-\beta_1,1-\beta_0\rangle}$-random closed set.
\end{thm}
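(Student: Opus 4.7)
The plan is to establish the equivalence via a natural ``prune the dead branches'' correspondence between $\nu$-Martin-L\"of random GW codes whose trees are infinite and $\mu$-Martin-L\"of random standard codes, and then to invoke Preservation of Randomness and No Randomness from Non-Randomness (Theorem \ref{prnr}). Throughout, set $p := 1-\beta_1$, $q := 1-\beta_0$, and $\mu := \mu_{\langle p, q\rangle}$.

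First I would carry out a branching-process calculation. Let $s$ denote the probability that a node present in $S_x$ has an infinite subtree. By independence of the children's survival and the fact that the last bit of a node is irrelevant to what happens at its children (the children's bits are always $0$ and $1$), $s$ satisfies
\[
s = 1 - (1-\beta_0 s)(1-\beta_1 s),
\]
whose nontrivial root is $s = (\beta_0+\beta_1-1)/(\beta_0 \beta_1) \in (0,1]$ under the hypothesis. The class $A := \{x \in 4^\omega : S_x \text{ is infinite}\}$ is a $\Pi^0_1$ set (finiteness is witnessed in finite time by all live leaves being killed) of $\nu$-measure exactly $s$, so since $s$ is a computable real, the conditional measure $\nu_A := \nu(\cdot \mid A)$ is a computable probability measure on $4^\omega$ with $\MLR_{\nu_A} = \MLR_\nu \cap A$.

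Next, conditioning on a node $\sigma \in S_x$ being extendible, a direct substitution of the value of $s$ gives that the conditional probabilities of only $\sigma 0$ extendible, only $\sigma 1$ extendible, and both extendible are $\beta_0(1-\beta_1 s) = 1-\beta_1 = p$, $\beta_1(1-\beta_0 s) = 1-\beta_0 = q$, and $\beta_0\beta_1 s = 1 - p - q$ respectively. Define a functional $\Phi : \subseteq 4^\omega \to 3^\omega$ that, on each $x \in A$, traverses the extendible subtree of $S_x$ in the standard order and records the branching type at each extendible node; the preceding calculation identifies the pushforward as $(\nu_A)_\Phi = \mu$. Preservation of Randomness then yields the forward direction: if $T = S_x$ is an infinite GW$(\beta_0,\beta_1)$-random tree, then $x \in \MLR_{\nu_A}$, so $\Phi(x) \in \MLR_\mu$ and $[T] = [T_{\Phi(x)}]$ is a $\mu^*$-random closed set. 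The converse follows from No Randomness from Non-Randomness: any $y \in \MLR_\mu$ lifts to some $x \in \MLR_{\nu_A}$ with $\Phi(x) = y$, making $S_x$ an infinite GW$(\beta_0,\beta_1)$-random tree with $[S_x] = [T_y]$.

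The main obstacle is certifying that $\Phi$ is sufficiently effective to invoke Theorem \ref{prnr}: although $S_x$ is uniformly computable from $x$, extendibility of a node is only a $\Pi^0_1$ property of $x$, so $\Phi$ is not a bare Turing functional on $A$. Following the layerwise-computability strategy of Bienvenu--Hoyrup--Shen alluded to in the introduction to this section, from $x$ together with a bound on the level of $x$ in the universal $\nu_A$-Martin-L\"of test one can uniformly compute any prescribed prefix of $\Phi(x)$, which is exactly what is needed to run both halves of Theorem \ref{prnr}. Adapting their symmetric-case argument to two distinct survival parameters is the only real technical work beyond the probabilistic calculation above, and since the GW independence structure is unchanged the bookkeeping on live leaves carries over with the only asymmetry being the two bit-dependent survival probabilities.
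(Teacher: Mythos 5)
Your proposal follows essentially the same route as the paper: the fixed-point computation of the survival probability is the paper's Lemma \ref{inftree}, your conditional branching probabilities $1-\beta_1$, $1-\beta_0$, $\beta_0+\beta_1-1$ are Lemma \ref{lrb}, and the endgame---a layerwise-computable pruning map in the style of Bienvenu--Hoyrup--Shen together with preservation of randomness and no-randomness-from-non-randomness for layerwise computable maps---is exactly how the paper concludes. The one structural difference is that you condition $\nu$ on the $\Pi^0_1$ event $A$ of infinitude and push $\nu_A$ forward into $3^\omega$, whereas the paper keeps $\nu$ itself, lets the pruning map send codes of finite trees to $3^\infty$, and treats $3^\infty$ as a computable atom of the pushforward, rescaling on its complement; the paper's bookkeeping is cheaper, because moving between randomness for the pushforward and $\mu_{\langle 1-\beta_1,1-\beta_0\rangle}$-randomness away from a computable atom is a routine rescaling of tests, while your route leans on a genuine lemma about conditioning.

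That lemma is the main thing you assert without argument: $\MLR_{\nu_A}=\MLR_\nu\cap A$ does not follow just from ``$\nu(A)$ is a positive computable real.'' The inclusion $\MLR_{\nu_A}\subseteq\MLR_\nu\cap A$ is easy, but the inclusion your forward direction actually uses, $\MLR_\nu\cap A\subseteq\MLR_{\nu_A}$, requires turning a $\nu_A$-test into a $\nu$-test covering the same points of $A$, and for that one needs the cylinder-wise conditional measure $\sigma\mapsto\nu(A\cap\llb\sigma\rrb)$ to be computable (here it is: it equals $\nu(\llb\sigma\rrb)\bigl(1-(1-s)^{m(\sigma)}\bigr)$, where $m(\sigma)$ is the number of pending surviving nodes determined by $\sigma$), so that $A$ can be effectively approximated from outside by clopen sets of almost-minimal measure inside each cylinder of the given test; this should be proved or cited. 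Second, the layerwise computability of the pruning map is the technical core of the paper's proof (Lemma \ref{lwc}), and your proposal only gestures at it: a complete write-up must exhibit the test, built from the sets of codes in which some extension of $\sigma$ survives to depth $n$ but the tree later dies above $\sigma$, whose $\nu$-measure is $r_n-\ell$ and tends to $0$ effectively, so that escaping level $i$ yields, node by node, a computable search depth certifying extendibility---this is the mechanism behind your claim that a bound on the test level lets one compute prefixes of $\Phi(x)$, and Theorem \ref{prnr} must then be invoked in its layerwise form (as the paper does via Hoyrup--Rojas), since $\Phi$ is not a Turing functional. Two smaller points: the survival probability is the nontrivial root of your fixed-point equation only after an argument (the paper runs an induction showing $r_n$ stays above that root), and identifying the pushforward as the i.i.d.\ measure $\mu_{\langle 1-\beta_1,1-\beta_0\rangle}$ uses that the tree conditioned on non-extinction again branches independently at its extendible nodes---the paper is also terse there, but it belongs in a full proof.
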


To prove Theorem \ref{gwiffrand}, we adapt an argument due to Bienvenu, Hoyrup, and Shen \cite{BieHoyShe17}, who, as noted above, give an alternative proof of the Diamondstone/Kjos-Hanssen result  \cite{DiaKjo12} using the machinery of layerwise computability (which we define shortly).  First, since the process that produces a Galton-Watson tree can yield either a finite tree or an infinite tree, we need to determine the probability of each outcome.

\begin{lem}\label{inftree}
The probability that a GW$(\beta_0,\beta_1)$-random tree is infinite is $\dfrac{a_2-a_3}{a_2}=\dfrac{\beta_0+\beta_1-1}{\beta_0\beta_1}$.
\end{lem}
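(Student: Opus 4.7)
The plan is to view the construction of $S_x$ under $\nu$ as a (two-type) Galton--Watson branching process. At any node $\sigma$ already in $S_x$, the joint fate of its two potential children is determined by the next symbol $x(i)$ and is independent of everything else, with marginal probabilities $a_3$ (neither survives), $a_0$ (only $\sigma 0$), $a_1$ (only $\sigma 1$), and $a_2$ (both). The key observation is that the survival probabilities $\beta_0,\beta_1$ depend only on the \emph{last bit} of a child, not on the rest of the parent's string, so the distribution of the subtree of $S_x$ below any surviving node is the same for every such node. Consequently, there is a well-defined extinction probability $q$---the probability that the subtree of $S_x$ rooted at a single surviving node is finite---and the probability that $S_x$ itself is infinite is precisely $1 - q$.

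Next I would derive a fixed-point equation for $q$. The subtree rooted at a surviving node is finite iff, for each of its two potential children, that child either fails to survive or survives and heads a finite subtree; these two events are independent. This gives
\[
q = \bigl[(1-\beta_0) + \beta_0 q\bigr]\bigl[(1-\beta_1) + \beta_1 q\bigr] = a_3 + (a_0 + a_1)q + a_2 q^2.
\]
Using $a_0 + a_1 + a_2 + a_3 = 1$, this rearranges to $a_2 q^2 - (a_2 + a_3)q + a_3 = 0$, which factors as $(q - 1)(a_2 q - a_3) = 0$. So $q \in \{1,\, a_3/a_2\}$, and a short algebraic check gives $1 - a_3/a_2 = (\beta_0 + \beta_1 - 1)/(\beta_0\beta_1)$, matching the target formula.

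The main step---and the principal obstacle---is showing that $q = a_3/a_2$ rather than $q = 1$ whenever $\beta_0 + \beta_1 > 1$. The cleanest route is to invoke the classical supercritical Galton--Watson dichotomy: the expected number of surviving children at each node is $\beta_0 + \beta_1 > 1$, so the extinction probability is strictly less than $1$, forcing $q = a_3/a_2$. If one prefers to argue in situ, one can set $q_n$ equal to the probability that the subtree dies out within $n$ generations and verify by induction that $q_n = f^n(0)$, where $f(t) = a_3 + (a_0 + a_1)t + a_2 t^2$; the sequence $(q_n)$ is monotonically increasing and converges to the smallest non-negative fixed point of $f$, which is $a_3/a_2$ when $\beta_0 + \beta_1 > 1$. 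The boundary case $\beta_0 + \beta_1 = 1$ collapses $a_2 = a_3$, so $a_3/a_2 = 1$ and the asserted formula still gives $0$, in agreement with $q = 1$.
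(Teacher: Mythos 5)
Your proof is correct, and it is essentially the dual of the paper's argument rather than an identical one. The paper tracks the survival probabilities $r_n$ (the probability that the tree reaches depth $n$), derives the recurrence $r_{n+1}=(a_0+a_1)r_n+a_2(2r_n-r_n^2)$, and passes to the limit to get a quadratic with roots $0$ and $\frac{a_2-a_3}{a_2}$; your fixed-point equation $q=a_3+(a_0+a_1)q+a_2q^2$ for the extinction probability is exactly that recurrence rewritten via $q_n=1-r_n$, with roots $1$ and $a_3/a_2$. The genuine difference lies in how the correct root is selected: the paper does it by hand, with an induction showing $r_n>\frac{a_2-a_3}{a_2}$ for all $n$, which forces the limit onto the nonzero root, whereas your primary route quotes the classical supercritical Galton--Watson dichotomy (mean offspring $\beta_0+\beta_1>1$, nondegenerate since $a_2>0$), and your fallback argument---$q_n=f^n(0)$ increases to the smallest nonnegative fixed point of the offspring generating function---is the standard textbook form of the same selection step, phrased for extinction rather than survival. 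Either is sound: the paper's version is self-contained and avoids citing branching-process theory, yours is shorter if one is willing to invoke it, and your explicit treatment of the boundary case $\beta_0+\beta_1=1$ (where $a_2=a_3$, the two roots coincide, and the formula correctly gives survival probability $0$) makes visible something the paper handles only implicitly.
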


\begin{proof}
Let $r_n$ be the probability that a Galton-Watson tree contains a string of length $n$, which is clearly a non-increasing sequence.  Then we have the following recurrence relation:
\begin{equation*}\tag{$\dagger$}
r_{n+1}=a_0r_n + a_1r_n + a_2(2r_n-r_n^2)
\end{equation*}
The first term corresponds to the case that 0 is the only child of the root, followed by a tree of height $n$, the second term corresponds to the case that 1 is the only child of the root, followed by a tree of height $n$, and the third term corresponds to the case that both 0 and 1 survive and at least one is followed by a tree of height $n$.  

If we take the limit as $n$ approaches infinity of both sides of the above recurrence relation, we get
\[
\ell= (a_0+a_1+2a_2)\ell-a_2\ell^2,
\]
This simplifies to 
\[
a_2\ell^2-(a_0+a_1+2a_2-1)\ell=0.
\]
Since $a_0+a_1+a_2+a_3=1$, this implies
\[
a_2\ell^2-(a_2-a_3)\ell=0.
\]
which has solutions $\ell=0$ and $\ell=\dfrac{a_2-a_3}{a_2}$.  We claim that $r_n>\dfrac{a_2-a_3}{a_2}$ for all $n\in\omega$.  We proceed by induction.  First $r_0=1$ (since we assume that each Galton-Watson tree at least contains the empty string).  Next, assuming that $r_n>\dfrac{a_2-a_3}{a_2}$ for some fixed $n$, suppose for the sake of contradiction that $r_{n+1}\leq \dfrac{a_2-a_3}{a_2}$.  Combining this assumption with ($\dagger$) above yields the inequality
\[
a_2r_n^2-(a_0+a_1+2a_2)r_n+\dfrac{a_2-a_3}{a_2}\geq 0.
\]
Since $a_0+a_1+2a_2=1+a_2-a_3$, the above inequality, after factoring, can be rewritten as
\[
\Bigl(a_2r_n-1\Bigr)\left(r_n-\dfrac{a_2-a_3}{a_2}\right)\geq 0.
\]
This inequality holds precisely when both factors are positive or both factors are negative.  In the former case, using the first factor, we can conclude that $r_n\geq \dfrac{1}{a_2}> 1$ (since, by assumption, $\beta_0,\beta_1\in(0,1)$), which is impossible.  In the latter case, using the second factor, we can conclude that $r_n\leq \dfrac{a_2-a_3}{a_2}$, which contradicts our original assumption about $r_n$.  Thus it follows that $r_{n+1}>\dfrac{a_2-a_3}{a_2}$.  Since $r_n>\dfrac{a_2-a_3}{a_2}$ for all $n$, it follows that $\ell=\lim_{n\rightarrow\infty}r_n=\dfrac{a_2-a_3}{a_2}$.
\end{proof}

Hereafter, let us say that a GW-tree $T$ becomes \emph{extinct} if $T$ is finite, and that a GW-tree $T$ becomes \emph{extinct above} $\sigma\in T$ if $T$ only contains finitely many extensions of $\sigma$.

\begin{lem}\label{lrb}
The probability that any node in a pruned, infinite GW$(\beta_0,\beta_1)$-random tree has two children is $\beta_0 + \beta_1 - 1$, only a left child is $1 - \beta_1$, and only a right child is $1 - \beta_0$.
\end{lem}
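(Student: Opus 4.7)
The plan is to compute the branching probabilities directly by conditioning on the node in question being extendible. Let $T$ denote the GW$(\beta_0,\beta_1)$-random tree and write $s = \frac{\beta_0+\beta_1-1}{\beta_0\beta_1}$ for the survival probability supplied by Lemma~\ref{inftree}.

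First I would combine the independence built into the definition of $\nu$ with the branching (Markov) property of the GW process. For any node $\sigma \in T$, the events $\{\sigma 0 \in T\}$ and $\{\sigma 1 \in T\}$ are independent with respective probabilities $\beta_0$ and $\beta_1$; moreover, conditional on $\sigma i \in T$, the subtree of $T$ rooted at $\sigma i$ is an independent copy of a GW$(\beta_0,\beta_1)$-random tree. Consequently the event that $\sigma i$ is extendible (that is, present in $T$ with infinitely many descendants) has probability $\beta_i \cdot s$, and these events are independent across $i \in \{0,1\}$.

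I would then condition on $\sigma$ being extendible, which has conditional probability $s$ given $\sigma \in T$, and exploit the algebraic identity $\beta_0\beta_1 s = \beta_0+\beta_1-1$ coming from the formula for $s$. This yields
\begin{align*}
P(\text{both children extendible} \mid \sigma \text{ extendible}) &= \frac{\beta_0\beta_1 s^2}{s} = \beta_0\beta_1 s = \beta_0+\beta_1-1,\\
P(\text{only left child extendible} \mid \sigma \text{ extendible}) &= \frac{\beta_0 s\,(1-\beta_1 s)}{s} = \beta_0 - \beta_0\beta_1 s = 1-\beta_1,
\end{align*}
and symmetrically the probability that only the right child is extendible equals $1-\beta_0$. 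The three values sum to $1$, reflecting that $\sigma$ extendible forces at least one child of $\sigma$ to be extendible.

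There is no substantive obstacle once Lemma~\ref{inftree} is in place: the whole argument amounts to the product factorization $P(\sigma i \text{ extendible}) = \beta_i\cdot s$ together with the identity $\beta_0\beta_1 s = \beta_0+\beta_1-1$. The only point that requires brief care is to justify that the subtree of $T$ rooted at $\sigma i$, conditional on $\sigma i \in T$, is genuinely an independent GW$(\beta_0,\beta_1)$-random tree eligible for Lemma~\ref{inftree}; this is immediate from the i.i.d. structure of $\nu$ across nodes, which is precisely the branching property.
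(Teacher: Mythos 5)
Your proposal is correct and follows essentially the same route as the paper: both condition on the node surviving to infinity and use the survival probability $s=\frac{\beta_0+\beta_1-1}{\beta_0\beta_1}$ from Lemma~\ref{inftree} to compute the three conditional branching probabilities via $\beta_0\beta_1 s^2/s$, $\beta_0 s(1-\beta_1 s)/s$, and the symmetric case. Your explicit appeal to the branching property and the check that the three values sum to $1$ are welcome clarifications but do not change the argument.
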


\begin{proof}
By Lemma \ref{inftree}, the probability that a GW$(\beta_0,\beta_1)$-random tree does not become extinct is
\[
\dfrac{\beta_0+\beta_1-1}{\beta_0\beta_1}.
\]
Thus, for any string $\sigma\in\str$ in a GW$(\beta_0,\beta_1)$-random tree, the probability that $\sigma$ has two children above both of which the tree does not become extinct is
\[
\beta_0\beta_1 \bigg ( \frac{\beta_0 + \beta_1 - 1}{\beta_0\beta_1} \bigg )^2.
\]
So, the probability that $\sigma$ has two children in a pruned, infinite GW$(\beta_0,\beta_1)$-random tree is 
\begin{align*}
\frac{\beta_0\beta_1 \bigg ( \frac{\beta_0 + \beta_1 - 1}{\beta_0\beta_1} \bigg )^2}{\frac{\beta_0 + \beta_1 - 1}{\beta_0\beta_1}}  &= \beta_0\beta_1 \bigg ( \frac{\beta_0 + \beta_1 - 1}{\beta_0\beta_1} \bigg )\\
&= \beta_0 + \beta_1 - 1.
\end{align*}
The probability that a string $\sigma\in\str$ in a GW$(\beta_0,\beta_1)$-random tree only has a left child above which the tree does not become extinct is
\[
\beta_0 \bigg ( \frac{\beta_0 + \beta_1 - 1}{\beta_0\beta_1} \bigg ) \bigg ( 1 - \beta_1 \bigg ( \frac{\beta_0 + \beta_1 - 1}{\beta_0\beta_1} \bigg ) \bigg ).
\]
Therefore, the probability that $\sigma$ has only a left child in a pruned, infinite GW$(\beta_0,\beta_1)$-random tree is
\begin{align*}
\frac{\beta_0 \bigg ( \frac{\beta_0 + \beta_1 - 1}{\beta_0\beta_1} \bigg ) \bigg ( 1 - \beta_1 \bigg ( \frac{\beta_0 + \beta_1 - 1}{\beta_0\beta_1} \bigg ) \bigg )}{\frac{\beta_0 + \beta_1 - 1}{\beta_0\beta_1}} &= \frac{\beta_0 \bigg ( \frac{\beta_0 + \beta_1 - 1}{\beta_0\beta_1} \bigg ) - \beta_0\beta_1 \bigg ( \frac{\beta_0 + \beta_1 - 1}{\beta_0\beta_1} \bigg )^2}{\frac{\beta_0 + \beta_1 - 1}{\beta_0\beta_1}}\\
&= \beta_0 - \beta_0\beta_1 \bigg (\frac{\beta_0 + \beta_1 - 1}{\beta_0\beta_1} \bigg )\\
&= 1 - \beta_1.
\end{align*}
Using a similar argument, we can prove that the probability that $\sigma$ has only a right child in a pruned, infinite GW$(\beta_0,\beta_1)$-random tree is
\[
1 - \beta_0. \qedhere
\]
\end{proof}

From Lemma \ref{lrb}, we see that if we take an infinite GW($\beta_0,\beta_1$)-random tree and remove all of its terminal nodes, then the resulting distribution is given by the measure $\mu^*_{\langle 1-\beta_1,1-\beta_0\rangle}$.  In order to derive Theorem \ref{gwiffrand}, we need to verify that there is an effective procedure that maps a code of an infinite GW($\beta_0,\beta_1$)-random tree to a $\mu^*_{\langle 1-\beta_1,1-\beta_0\rangle}$-random closed set.

In the case of a single parameter GW-tree, this was shown by Bienvenu, Hoyrup, and Shen \cite{BieHoyShe17} using the machinery of layerwise computability, originally defined by Hoyrup and Rojas \cite{HoyRoj09a, HoyRoj09b}.  As defined in \cite{BieHoyShe17}, for a computable measure $\mu$, a mapping $\Phi$ is $\mu$-\emph{layerwise computable} if there is a $\mu$-Martin-L\"of test $(U_i)_{i \in \omega}$ and a Turing machine $M$ such that, for any $n \in \omega$ and $x \notin U_n$, $M(n,x) = \Phi(x)$ (here we think of $M$ as being equipped with an oracle tape on which $x$ is written and a second tape containing the input $n$). Intuitively, the lemma below shows that there exists a layerwise computable mapping that converts a code for an infinite tree with dead ends to a code for the same infinite tree with the dead ends removed.

\begin{lem}\label{lwc}
There exists a $\nu$-layerwise computable mapping $\Phi: 4^{\omega} \rightarrow 4^{\omega}$ such that, for all $x \in 4^{\omega}$, if $\Phi(x)\in 3^\omega$, $T_{\Phi(x)}$ is the set of all infinite branches of $T_x$.
\end{lem}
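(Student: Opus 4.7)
The plan is to construct $\Phi$ so that, for each $x \in 4^\omega$, $\Phi(x)$ is a code in $4^\omega$ for the ``pruned'' tree obtained from $T_x$ by deleting every node having only finitely many extensions in $T_x$.  The central obstacle is that deciding whether $T_x^\sigma := \{\tau : \sigma\tau \in T_x\}$ is infinite is only $\Pi^0_2$ in $x$; the layerwise framework lets us resolve this effectively once $x$ avoids the levels of a well-chosen Martin-L\"of test.

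I would first control the rate of extinction.  Let $p_m$ denote the $\nu$-probability that a fresh GW$(\beta_0,\beta_1)$-tree is finite but has height at least $m$, so that $p_m = r_m - q$, where $r_m$ is as in Lemma~\ref{inftree} and $q = (a_2 - a_3)/a_2$.  Substituting $r_m = q + p_m$ into the recurrence $(\dagger)$ and using the fixed-point identity $q = (a_0 + a_1 + 2a_2)q - a_2 q^2$ yields
\[
p_{m+1} = (1 - a_2 + a_3)\,p_m - a_2 p_m^2,
\]
from which one extracts the geometric bound $p_m \leq (1 - a_2 + a_3)^m$, whose contraction factor is strictly less than $1$ because $\beta_0 + \beta_1 > 1$ forces $a_2 > a_3$.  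The constants $a_2,a_3$ being computable from $\beta_0,\beta_1$, this bound is effective.

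Next I would define the test.  Noting that $\nu(\sigma \in T_x) = \beta_0^{|\sigma|_0}\beta_1^{|\sigma|_1}$ and that, conditionally on $\sigma \in T_x$, the subtree $T_x^\sigma$ is distributed as an independent GW$(\beta_0,\beta_1)$-tree, choose a computable function $f \colon \omega \times \omega \to \omega$ large enough to force $p_{f(n,k)} \leq (\beta_0 + \beta_1)^{-k} \cdot 2^{-n-k-1}$, and let $U_n$ be the $\Sigma^0_1$ class of all $x \in 4^\omega$ such that some $\sigma \in T_x$ has $T_x^\sigma$ finite of height at least $f(n,|\sigma|)$.  A union bound over $k = |\sigma|$ gives
\[
\nu(U_n) \leq \sum_{k \in \omega} (\beta_0 + \beta_1)^k\,p_{f(n,k)} \leq \sum_{k \in \omega} 2^{-n-k-1} \leq 2^{-n},
\]
so $(U_n)_{n \in \omega}$ is a valid $\nu$-Martin-L\"of test.

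Finally, given $n$ and oracle $x \notin U_n$, the machine $M$ enumerates the surviving nodes of the pruned tree in the canonical Galton-Watson ordering, starting from the root.  For each candidate node $\sigma$, $M$ dovetails two searches: for (a) a height at which $T_x^\sigma$ becomes empty, and for (b) a node of $T_x^\sigma$ at height $f(n,|\sigma|)$.  Because $x \notin U_n$, exactly one search terminates---outcome (a) if $T_x^\sigma$ is finite, outcome (b) if $T_x^\sigma$ is infinite---so $M$ decides the status of both children of each surviving node and writes the corresponding digit in $\{0,1,2,3\}$ onto the output tape.  When $T_x$ has an infinite branch from the root, every surviving node of the pruned tree has at least one child with infinite subtree, so no digit $3$ is ever emitted; hence $\Phi(x) \in 3^\omega$ and $T_{\Phi(x)}$ coincides by construction with the set of infinite branches of $T_x$.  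The main difficulty lies in the effective choice of $f$: since the expected number of level-$k$ nodes of $T_x$ grows like $(\beta_0 + \beta_1)^k$, a naive union bound diverges, and the geometric decay of $p_m$ established above is precisely what lets $f$ be chosen effectively.
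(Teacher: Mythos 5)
Your proposal is correct and follows essentially the same route as the paper's proof: a Martin-L\"of test certifying that any node whose subtree reaches a prescribed (node-dependent) height cannot become extinct, followed by a machine that, on oracles outside the $n$-th test level, decides extendibility of each node by a finite search and emits the digits of the pruned code --- the paper packages this as per-node tests $U^{\sigma_j}_{i+j+1}$ combined diagonally into $V_i$, while you use a single test $U_n$ with a computable depth function $f(n,k)$ and a union bound weighted by the expected number $(\beta_0+\beta_1)^k$ of level-$k$ nodes, and your explicit estimate $p_m\le(1-a_2+a_3)^m$ even supplies the effective convergence of $r_m-\ell$ that the paper merely asserts. The only caveat is that this geometric contraction needs $a_2>a_3$, i.e.\ $\beta_0+\beta_1>1$ strictly, whereas the setting allows $\beta_0+\beta_1=1$; in that boundary case the recurrence $p_{m+1}=p_m-a_2p_m^2$ still gives the effective modulus $p_m\le 1/(1+a_2 m)$, so your argument is easily patched to cover the full parameter range.
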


\begin{proof}
For all $\sigma \in 2^{<\omega}$ and $n \in \omega$, let
\[
U^{\sigma}_n = \{x \in 4^{\omega} \colon (\exists \tau \succeq \sigma) \tau \in T_x\uh(|\sigma| + n) \;\&\; (\exists k > n)(\forall \tau \succeq \sigma) \tau \notin T_x \uh (|\sigma| + k)\},
\]
where $T \upharpoonright i$ is the set of all strings in $T$ of length $i \in \omega$, for any tree $T$. That is, $U^{\sigma}_n$ consists of all codes $x \in 4^{\omega}$ such that a length $n$ extension of $\sigma$ survives in $T_x$ but  $T_x$ eventually becomes extinct above $\sigma$.  Clearly, $(U^\sigma_n)_{n\in\omega}$ is effectively open uniformly in $\sigma$.

Using the notation from Lemma \ref{inftree}, where $r_n$ is the probability that GW-tree contains a string of length $n$ and $\ell$ is the probability that such a tree contains a string of every length, then, for all $\sigma \in 2^{<\omega}$ and $n \in \omega$,
\begin{align*}
\nu(U^{\sigma}_n) = r_n - \ell,
\end{align*}
where $\nu$ is the measure on $4^\omega$ from the definition of a random GW($\beta_0,\beta_1$)-tree.  Observe that, for any $\sigma \in \str$, as $n$ increases $\nu(U^{\sigma}_n)$ decreases and approaches zero (effectively in $n$). Therefore, there is a computable subsequence of indices $(n_i)_{i \in \omega}$ such that, for all $i \in \omega$,
\begin{align*}
\nu(U^{\sigma}_{n_i}) \leq 2^{-i}.
\end{align*}
 Without loss of generality, by taking an appropriate subsequence and relabeling the indices, we can assume that $\nu(U^{\sigma}_n)\leq 2^{-n}$ for all $i,n\in\omega$.  Thus $(U^{\sigma}_n)_{n\in\omega}$ is a $\nu$-Martin-L\"of test.


Now, letting $(\sigma_i)_{i\in\omega}$ be the enumeration of $\str$ in length-lexicographic order, for all $i \in \omega$, we set
\[
V_i = \bigcup_{j \in \omega}U^{\sigma_j}_{i+j+1}.
\]
Since, $(U^{\sigma_j}_n)_{n \in \omega}$ is a $\nu$-Martin-L\"of test for each $j\in\omega$, we have
\begin{align*}
\nu(V_i) &= \displaystyle\sum_{j \in \omega}\nu(U^{\sigma_j}_{i+j+1})\leq \displaystyle\sum_{m = 1}^{\infty}2^{-(i+m)}= 2^{-i},
\end{align*}
which implies that $(V_i)_{i \in \omega}$ is a $\nu$-Martin-L\"of test.

We now show that $\Phi$ is layerwise computable by describing a Turing machine such that, when given $i \in \omega$ and $x \in 4^{\omega}$ such that $x \notin V_i$, produces $\Phi(x)$ on the output tape.

First, observe that $x\notin V_i$ implies that $x\notin U^{\sigma_j}_{i+j+1}$ for all $j\in\omega$.  In particular, this implies that for each $j\in\omega$, if there is some $\tau\succeq\sigma_j$ such that $\tau\in T_x\uh(|\sigma_j|+i+j+1)$ then for all $k>i+j+1$ there is some $\tau\succeq\sigma_j$ such that $\tau\in T_x\uh(|\sigma_j|+k)$.  In other words, if we see that $T_x$ contains an extension of $\sigma_j$ of length $i+j+1$, then we can conclude that $T_x$ does not become extinct above $\sigma_j$.

Our machine $M$ works by adding binary strings to a set $S$, which is a set of strings above which our procedure will take action (defined below); the set $S$, defined in stages, is equal to the set of extendible nodes of $T_x$.   First, $M$ sets $S_0
 = \emptyset$ and constructs the tree coded by $x$ to see if $T_x$ contains a string of length $i + 1$. If so, then by the discussion in the previous paragraph, $T_x$ contains a string of every length and thus is infinite, and then $M$ places the empty string $\epsilon$ inside $S_1$. Otherwise, $S=S_0=\emptyset$, $M$ halts, and $\Phi(x)$ outputs $3^\infty$.

If $S_1\neq\emptyset$, we proceed inductively as follows.  For $k\geq 1$, assume that $M$ has already produced $k-1$ bits of output and suppose that $\sigma\in S_k$ is the lexicographically least string above which we have not taken action.  We describe how our procedure takes action above $\sigma$.
Since $\sigma=\sigma_j$ for some $j\in\omega$, the two extensions of $\sigma_j$ are $\sigma_{2j+1}=\sigma_j0$ and $\sigma_{2j+2}=\sigma_j1$ (as we are using the standard length-lexicographic ordering of $\str$).  $M$ then constructs sufficiently many levels of the tree $T_x$ to determine if $T_x$ is extinct $i + 2j + 2$ levels above  $\sigma_j0$.  If not, we enumerate $\sigma_j0$ into $S_{k+1}$.  Then $M$ similarly determines whether $T_x$ is extinct $i+2j+3$ levels above $\sigma_j1$; in the case that it is not, we also enumerate $\sigma_j1$ into $S_{k+1}$.  Thus, either $\sigma_j0$ or $\sigma_j1$ are added to $S_{k+1}$, or both.

If only $\sigma_j0$ is added to $S_{k+1}$, then $M$ outputs 0. If only $\sigma_j1$ was added to $S_{k+1}$, then $M$ outputs 1. If both $\sigma_j0$ and $\sigma_j1$ were added, then $M$ outputs 2.  It is straightforward to verify that $\Phi(x)$ is the desired layerwise computable mapping. \qedhere
\end{proof}

Note that given any $\nu$-random $x\in 4^\omega$ that codes for a tree with no infinite paths, we have $\Phi(x)=3^\infty$.  By Lemma \ref{inftree}, $3^\infty$ is an atom of the measure induced by $\Phi$ and $\nu$; in fact, the singleton $\{3^\infty\}$ is given measure $1-\frac{\beta_0+\beta_1-1}{\beta_0\beta_1}$.  Moreover, by Lemma \ref{lrb}, the measure on $3^\omega$ induced by $\Phi$ when restricted to those $x\in 4^\omega$ that code for a tree with infinite paths (obtained by considering the range of $\Phi$ without the sequence $3^\infty$ and scaling the measure appropriately) is precisely the measure $\mu^*_{\langle 1-\beta_1,1-\beta_0\rangle}$.
We can thus conclude the proof of Theorem \ref{gwiffrand} using the fact that both randomness preservation and no randomness from non-randomness hold for layerwise computable maps (see \cite[Proposition 5.3.1]{HoyRoj09b}):  randomness preservation ensures that $\Phi$ maps an infinite GW$(\beta_0,\beta_1)$-random tree $T$ to the corresponding $\mu^*_{\langle 1-\beta_1,1-\beta_0\rangle}$-random closed set $[T]$, and no randomness from non-randomness ensures that every $\mu^*_{\langle 1-\beta_1,1-\beta_0\rangle}$-random closed set $C$ is the image of some infinite GW$(\beta_0,\beta_1)$-random tree $T$ under $\Phi$ with $C=[T]$.

\subsection{Intersections}

We now turn to the main result of the section, which provides an affirmative answer to Question \ref{q2}.  Here the machinery we laid out in the previous section will prove to be useful.

\begin{thm}\label{thm-conv}
Suppose that $p, q, r, s\geq 0$, $0\leq p + q\leq 1$ and $0\leq r + s\leq1$.  If $R\in\K(\cs)$ is Martin-L\"of random with respect to the measure $\mu^*_{\langle p+r-pr,q+s-qs \rangle}$, then there are 
$P,Q\in \K(\cs)$ such that
\begin{itemize}
\item[(i)] $P$ is $\mu^*_{\langle p,q\rangle}$-random relative to $Q$,
\item[(ii)] $Q$ is  $\mu^*_{\langle r,s\rangle}$-random relative to $P$, and
\item[(iii)] $R=P\cap Q$.
\end{itemize}
\end{thm}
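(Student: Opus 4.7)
The plan is to lift $R$ to a Galton--Watson code and then split that code into two GW-codes whose associated trees intersect precisely in a tree for $R$; randomness preservation and van Lambalgen's theorem will then supply the mutual randomness. Set $(\beta_0^P,\beta_1^P) := (1-q,1-p)$, $(\beta_0^Q,\beta_1^Q) := (1-s,1-r)$, and $(\beta_0^R,\beta_1^R) := (\beta_0^P\beta_0^Q,\beta_1^P\beta_1^Q) = ((1-q)(1-s),(1-p)(1-r))$, and let $\nu_P$, $\nu_Q$, $\nu_R$ be the GW-measures on $4^\omega$ with these parameters. By Theorem \ref{gwiffrand}, $\nu_R$ is the GW-measure corresponding to $\mu^*_{\langle p+r-pr,q+s-qs\rangle}$-random closed sets.

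First, I would apply no-randomness-from-non-randomness for the layerwise computable map $\Phi$ of Lemma \ref{lwc} to obtain some $w \in \MLR_{\nu_R}$ with $T_w$ infinite and $[T_w] = R$. Next, picking any $z \in \MLR^w_\lambda$ (with $\lambda$ Lebesgue measure on $2^\omega$), so that $(w,z)$ is jointly random with respect to $\nu_R \oplus \lambda$ by van Lambalgen's theorem, I would build a layerwise computable map $\Xi$ sending $(w,z)$ to a pair $(x,y) \in 4^\omega \oplus 4^\omega$ as follows. Processing the nodes of $T_w$ in length-lexicographic order, for each $\sigma \in T_w$ and each child $\sigma i$ ($i \in \{0,1\}$), place $\sigma i$ into both $T_x$ and $T_y$ if $\sigma i \in T_w$, and otherwise consume bits of $z$ via a standard arithmetic-coding subroutine to randomly select one of the three outcomes ``$\sigma i \in T_x \setminus T_y$'', ``$\sigma i \in T_y \setminus T_x$'', or ``$\sigma i \notin T_x \cup T_y$'' with conditional probabilities
\[
\frac{\beta_i^P(1-\beta_i^Q)}{1-\beta_i^P\beta_i^Q},\quad\frac{(1-\beta_i^P)\beta_i^Q}{1-\beta_i^P\beta_i^Q},\quad\frac{(1-\beta_i^P)(1-\beta_i^Q)}{1-\beta_i^P\beta_i^Q}.
\]
Once a node lies in $T_x \setminus T_y$ (resp.\ $T_y \setminus T_x$), I would continue extending that tree as an independent GW$(\beta_0^P,\beta_1^P)$- (resp.\ GW$(\beta_0^Q,\beta_1^Q)$-) process by drawing further bits from $z$. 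By construction $T_x \cap T_y = T_w$, since every branching off $T_w$ is placed in at most one of $T_x,T_y$ and thereafter remains in only that tree.

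The critical identity is that the pushforward of $\nu_R \oplus \lambda$ under $\Xi$ equals $\nu_P \oplus \nu_Q$: this reflects the natural coupling in which two independent GW-processes with the given parameters are run and conditioned on their intersection tree, which reproduces exactly the conditional probabilities displayed above. Granting the pushforward identity, layerwise randomness preservation gives $(x,y) \in \MLR_{\nu_P \oplus \nu_Q}$, whence van Lambalgen yields $x \in \MLR^y_{\nu_P}$ and $y \in \MLR^x_{\nu_Q}$. Theorem \ref{gwiffrand}, applied relativized, then produces closed sets $P := [T_x]$ and $Q := [T_y]$ that are $\mu^*_{\langle p,q\rangle}$-random relative to $y$ (and hence relative to $Q$, which is $y$-computable) and $\mu^*_{\langle r,s\rangle}$-random relative to $x$ (and hence relative to $P$), respectively; moreover $P \cap Q = [T_x \cap T_y] = [T_w] = R$.

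The hard part will be verifying that $\Xi$ is indeed layerwise computable and that its pushforward matches $\nu_P \oplus \nu_Q$ exactly. Simulating biased coin flips with computable bias from a Lebesgue-random $z$ is standard, but the indexing of which bits of $z$ are consumed at which nodes has to be arranged so that the total procedure is computable from the finitely-identified portion of $w$, and one must accommodate the fact that $\Xi$ inherits its layerwise (rather than fully computable) character from $\Phi$. The pushforward identity itself decomposes edge-by-edge through independence of the GW coin flips, but a clean global argument requires care about the independent extensions into the complement of $T_w$ and the equivalence between ``first run two independent GW-processes and keep the intersection'' and ``first generate the intersection GW-process and then fill in the complement''.
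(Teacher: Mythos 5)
Your proposal is correct in outline but takes a genuinely different route from the paper's. The paper obtains the converse almost for free from its re-proof of part (3) of the Intersection Theorem: Lemma \ref{fit} shows that the single intersection map $\Gamma=\Phi\circ\Psi$ on pairs of ternary codes is layerwise computable and pushes $\mu_{\langle p,q\rangle}\oplus\mu_{\langle r,s\rangle}$ forward to (the rescaled non-atomic part of) $\mu_{\langle p+r-pr,q+s-qs\rangle}$, and then Theorem \ref{thm-conv} follows by applying no-randomness-from-non-randomness to $\Gamma$ and splitting the preimage with van Lambalgen. You instead construct the splitting explicitly: lift $R$ to a GW-code $w$, adjoin Lebesgue-random bits $z$, and realize the standard coupling in which two independent GW-processes are generated by first generating their intersection (survival parameters $\beta_i^P\beta_i^Q$, which you correctly match to $\mu^*_{\langle p+r-pr,q+s-qs\rangle}$) and then filling in the excess of each tree conditionally independently; forward randomness preservation, van Lambalgen, and the relativized form of Theorem \ref{gwiffrand} (together with $x_Q\le_T y$ and $x_P\le_T x$, which hold because $\Phi$ is layerwise computable) then give (i)--(iii). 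The paper's route buys economy: no coupling computation, no simulation of biased coins from $\lambda$-random bits, and no pushforward identity beyond the one already needed for part (3). Your route buys an explicit recipe for manufacturing $P$ and $Q$ from $R$ plus independent randomness, at the cost of the verifications you flag, which are standard but must be done.

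Two points to nail down when you do them. First, $\Xi$ need not be handled layerwise at all: it is a Turing functional defined on a set of $(\nu_R\oplus\lambda)$-measure one (each bias-sampling halts off an effectively null set), so Theorem \ref{prnr}(1) applies directly; layerwise machinery is only needed for $\Phi$, both in your initial lift of $R$ to $w$ (where, as in the paper's proof of Theorem \ref{gwiffrand}, one must note that a $\mu_{\langle p+r-pr,q+s-qs\rangle}$-random code is also random for the pushforward of $\nu_R$ under $\Phi$, a mixture with an atom at $3^\infty$, which requires the survival probability to be positive, i.e.\ the implicit hypothesis $p+q+r+s<1+pr+qs$ that the paper also uses) and in converting $x,y$ back to ternary codes. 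Second, for the pushforward of $\nu_R\oplus\lambda$ under $\Xi$ to be \emph{exactly} $\nu_P\oplus\nu_Q$ you must also randomize the code symbols beyond the last surviving node whenever a generated tree (or $T_w$ itself) is finite; otherwise the image measure agrees with $\nu_P\oplus\nu_Q$ only on the infinite-tree part, and since that event has measure less than one, the domination needed to transfer Martin-L\"of tests does not follow automatically. Both fixes are routine and do not affect the soundness of your approach.
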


To prove Theorem \ref{thm-conv}, we will reprove part (3) of the Intersection Theorem using the lemma below, from which the converse will immediately follow by no randomness from non-randomness.

\begin{lem}\label{fit}
Suppose that $p, q, r, s \geq 0$, $0 \leq p + q \leq 1$ and $0 \leq r + s \leq 1$, and let $P \in \K(\cs)$ be $\mu^*_{\langle p,q \rangle}$-random relative to $Q \in \K(\cs)$ and $Q$ be $\mu^*_{\langle r,s \rangle}$-random relative to $P$. There exists a layerwise computable mapping $\Gamma: 3^{\omega} \rightarrow 4^{\omega}$ such that, if $P \cap Q \neq \emptyset$ and $x_P, x_Q \in 3^{\omega}$  are codes for $P$ and $Q$, respectively, then $\Gamma(x_P \oplus x_Q)\in 3^\omega$ is a code for $P \cap Q$ and is $\mu_{\langle p+r-pr, q+s-qs \rangle}$-random.
\end{lem}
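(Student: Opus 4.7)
The plan is to factor $\Gamma$ as $\Psi_2\circ\Psi_1$, where $\Psi_1\colon 3^\omega\to 4^\omega$ reads $x_P\oplus x_Q$ and encodes the (possibly finite) intersection $T_{x_P}\cap T_{x_Q}$ in the GW-style $4^\omega$ coding of Section~\ref{subsec-gw}, and $\Psi_2\colon 4^\omega\to 4^\omega$ is a direct analogue of $\Phi$ from Lemma~\ref{lwc} that trims dead ends.

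To define $\Psi_1$, we enumerate the surviving nodes of $T_{x_P}\cap T_{x_Q}$ in length-lexicographic order, and at each such $\sigma$ output $0,1,2$, or $3$ according to which of $\sigma0,\sigma1$ lie in the intersection (using the convention from the GW coding, padding with $3$'s if the intersection exhausts itself). Because $T_{x_P}$ and $T_{x_Q}$ have no dead ends, membership of any $\sigma$ in either tree is decidable from a finite prefix of the corresponding code, so $\Psi_1$ is computable. By Van Lambalgen's theorem the hypothesis on $P$ and $Q$ is equivalent to $x_P\oplus x_Q\in\MLR_{\mu_{\langle p,q\rangle}\oplus\mu_{\langle r,s\rangle}}$; combined with the stationarity of the two Bernoulli measures and the independence of the distinct bits of $x_P$ and $x_Q$ consulted at each surviving node, a direct calculation shows that the pushforward $\nu':=(\mu_{\langle p,q\rangle}\oplus\mu_{\langle r,s\rangle})_{\Psi_1}$ is the Bernoulli measure on $4^\omega$ with
\[
\pi_2=(1-p-q)(1-r-s),\ \pi_0=(1-q)(1-s)-\pi_2,\ \pi_1=(1-p)(1-r)-\pi_2,\ \pi_3=1-\pi_0-\pi_1-\pi_2.
\]
Preservation of randomness then yields $\Psi_1(x_P\oplus x_Q)\in\MLR_{\nu'}$.

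For $\Psi_2$, we rerun the proof of Lemma~\ref{lwc} with the GW weights $a_i$ replaced by our $\pi_i$. The extinction recurrence becomes $r_{n+1}=(\pi_0+\pi_1+2\pi_2)r_n-\pi_2 r_n^2$, and the fixed-point equation factors as $(e-1)(\pi_2 e-\pi_3)=0$, giving $\ell=1-\pi_3/\pi_2$ as the non-trivial survival probability in the intended regime $\pi_2>\pi_3$ (the opposite regime corresponds precisely to the Cenzer--Weber condition forcing $P\cap Q=\emptyset$). The same inductive argument as in Lemma~\ref{inftree} delivers $r_n\to\ell$ effectively, producing a $\nu'$-Martin--L\"of test that witnesses $\nu'$-layerwise computability of the dead-end-trimming map $\Psi_2$; it maps infinite-tree codes into $3^\omega$ and finite-tree codes to the atom $3^\infty$.

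Finally, the analogue of Lemma~\ref{lrb} gives the branching distribution of the trimmed tree: conditioned on survival, the probability at each node of having only a left child is
\[
\pi_0+\pi_2\cdot(\pi_3/\pi_2)=\pi_0+\pi_3=1-(1-p)(1-r)=p+r-pr,
\]
with a symmetric computation giving $q+s-qs$ for only-a-right-child. Hence $(\nu')_{\Psi_2}$ restricted to $3^\omega$ is exactly $\mu_{\langle p+r-pr,q+s-qs\rangle}$. Setting $\Gamma=\Psi_2\circ\Psi_1$ (layerwise computable as the composition of a computable map with a layerwise computable one) and invoking preservation of randomness one last time, we conclude that on the event $P\cap Q\neq\emptyset$, $\Gamma(x_P\oplus x_Q)$ is a $\mu_{\langle p+r-pr,q+s-qs\rangle}$-random $3^\omega$-code for $P\cap Q$. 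The main obstacle is the generalization of Lemma~\ref{lwc} to the non-GW Bernoulli measure $\nu'$, together with the minor subtlety that $\Psi_1$ produces $3$-padded outputs on $\{P\cap Q=\emptyset\}$; both are absorbed into the layerwise test in the same manner as the finite-GW-tree event in Lemma~\ref{lwc}.
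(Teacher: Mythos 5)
Your proposal is correct and follows essentially the same route as the paper's proof: there too, $\Gamma$ is factored as the dead-end-trimming layerwise map of Lemma~\ref{lwc} composed with a total computable map $\Psi$ that codes $T_{x_P}\cap T_{x_Q}$ in $4^\omega$ (padding with $3$'s upon extinction), the measure $\mu_{\langle p,q\rangle}\oplus\mu_{\langle r,s\rangle}$ is pushed forward, the Martin-L\"of test is pulled back through $\Psi$ to get layerwise computability of the composition, and the pruned tree's branching probabilities $p+r-pr$ and $q+s-qs$ are read off exactly as in your last display. The only differences are minor: the paper identifies the marginal survival parameters $\beta_0=(1-q)(1-s)$, $\beta_1=(1-p)(1-r)$ and invokes Lemmas~\ref{lwc} and~\ref{lrb} rather than redoing the computations with your weights $\pi_i$ (your direct calculation is, if anything, a bit more careful, since the pushforward is not literally a GW measure because $\pi_2=(1-p-q)(1-r-s)\neq\beta_0\beta_1$, though the pruned-tree law depends only on the marginals, so both arguments land in the same place), and note the small slip that $(e-1)(\pi_2 e-\pi_3)=0$ is the fixed-point equation for the extinction probability, not for your survival recurrence $r_{n+1}=(\pi_0+\pi_1+2\pi_2)r_n-\pi_2 r_n^2$ (whose nonzero root is indeed the value $\ell=1-\pi_3/\pi_2$ that you correctly use afterwards).
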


\begin{proof}
First we describe a total computable mapping $\Psi: 3^{\omega} \rightarrow 4^{\omega}$ such that, on input $x = y \oplus z$, $\Psi$ produces a code for the tree $T_y \cap T_z$, which may include non-extendible nodes (and may even be finite). We define a machine $M$ corresponding to this mapping as follows. 


On input $y\oplus z$, $M$ yields its output on the basis of an enumeration of $T_y\cap T_z$, which we shall write as $T$.  First, $M$ places $\epsilon$ into $T$.  Next, $M$ enumerates $T$ level by level as follows.  Suppose that $T$ has been defined for all strings of length $\ell$.  For each $\sigma\in T$ of length $\ell$, taken in lexicographic order,  $M$ checks to see if $\sigma0$ and $\sigma1$ are also in $T_y\cap T_z$ (using the input $y\oplus z$).  There are four cases to consider:
\begin{itemize}
\item Case 1: $\sigma0\in T_y\cap T_z$ and $\sigma1\notin T_y\cap T_z$, in which case $\sigma0$ is placed into $T$ and $M$ outputs a 0.
\item Case 2: $\sigma0\notin T_y\cap T_z$ and $\sigma1\in T_y\cap T_z$, in which case $\sigma1$ is placed into $T$ and $M$ outputs a 1.
\item Case 3: $\sigma0\in T_y\cap T_z$ and $\sigma1\in T_y\cap T_z$, in which case both $\sigma0$ and $\sigma1$ are placed into $T$ and $M$ outputs a 2.
\item Case 4: $\sigma0\notin T_y\cap T_z$ and $\sigma1\notin T_y\cap T_z$, in which case neither $\sigma0$ nor $\sigma1$ is placed into $T$ and $M$ outputs a 3.
\end{itemize}

\noindent If at any point during this process, there are no new strings for $M$ to add to $T$, $T_y \cap T_z$ is a finite tree and $M$ outputs an infinite sequence of 3's for its remaining output.  Lastly, let $\nu$ be the measure on $3^\omega$ induced by intersecting a $\mu^*_{\langle p,q \rangle}$-random closed set $P$ with a $\mu^*_{\langle r,s \rangle}$-random closed set $Q$ that is random relative to $P$; that is, $\nu=(\mu_{\langle p,q \rangle}\oplus \mu_{\langle r,s \rangle})\circ \Psi^{-1}$ (we will explicitly calculate this measure below).


By Lemma \ref{lwc}, there exists a $\nu$-layerwise computable mapping $\Phi: 4^{\omega} \rightarrow 4^{\omega}$ such that, for all $x \in 4^{\omega}$, if $\Phi(x)\in 3^\omega$, $T_{\Phi(x)}$ is the set of all infinite branches of $T_x$. This means that there exists a $\nu$-Martin-L\"of test $(V_i)_{i \in \omega}$ (as in the proof of Lemma \ref{lwc}) and a Turing machine $M'$ such that $M'(x,i) = \Phi(x)$ for any $i \in \omega$ and $x \notin V_i$.  We would like to compose $\Psi$ with $\Phi$ to define $\Gamma$, but some care is needed.

For each $n\in\omega$, define $W_n=\Psi^{-1}(V_n)$.  We verify that $(W_n)_{n\in\omega}$ is a Martin-L\"of test with respect to the measure  $\mu_{\langle p,q \rangle}\oplus \mu_{\langle r,s \rangle}$.   First, since $\Psi$ is total, $(W_n)_{n\in\omega}$ is uniformly $\Sigma^0_1$.  Moreover, we have, for each $n\in\omega$,
\[
(\mu_{\langle p,q \rangle}\oplus \mu_{\langle r,s \rangle})(W_n)=(\mu_{\langle p,q \rangle}\oplus \mu_{\langle r,s \rangle})(\Psi^{-1}(V_n))=\nu(V_n)\leq 2^{-n}.
\]

Now, setting $\Gamma=\Phi\circ\Psi$, we claim that $\Gamma:3^\omega\rightarrow 4^\omega$ is the desired $(\mu_{\langle p,q \rangle}\oplus \mu_{\langle r,s \rangle})$-layerwise computable map, defined in terms of the test $(W_n)_{n\in\omega}$.  Given $P,Q\in\mathcal{K}(\cs)$ satisfying the hypothesis of the theorem, since $P$ $\cap$ $Q \neq \emptyset$, there exists some $n \in \omega$ such that $\Psi(x_P \oplus x_Q) \notin V_n$ by randomness preservation.  Thus $x_P \oplus x_Q\in\Psi^{-1}(V_n)$, and so 
\[
\Gamma(x_P \oplus x_Q)=\Phi(\Psi(x_P \oplus x_Q))
\]
yields the code of $P\cap Q$ as an element of $3^\omega$. (Note that $\Gamma(x_P \oplus x_Q)\notin 3^\omega$ if and only if $\Gamma(x_P \oplus x_Q)=3^\infty$ if and only if $P\cap Q=\emptyset$.)

Finally, we ensure that $\Gamma$ induces the measure $\mu_{\langle p+r-pr, q+s-qs \rangle}$ on $3^\omega$ (once we ignore the atom $3^\infty$ and scale the induced measure as in the proof of Theorem \ref{gwiffrand}). Given $x_P\oplus x_Q$ as above, let $y = \Psi(x_P \oplus x_Q) \in 4^{\omega}$. Under the assumption that $P \cap Q \neq \emptyset$, we calculate the probabilities that $y(i)$ is a 0, 1, 2, or 3, for any $i \in \omega$:
\begin{itemize}
\item $y(i) = 0$ if $(x_P(0), x_Q(0)) \in \{(0,0), (0,2), (2,0)\}$, which occurs with probability $pr + p(1 - r - s) + r(1 - p - q)$,
\item $y(i) = 1$ if $(x_P(0), x_Q(0)) \in \{(1,1), (1,2), (2,1)\}$, which occurs with probability $qs + q(1 - r - s) + s(1 - p - q)$,
\item $y(i) = 2$ if $(x_P(0), x_Q(0)) \in \{(2,2)\}$, which occurs with probability $(1 - p - q)(1 - r - s)$, and
\item $y(i) = 3$ if $(x_P(0), x_Q(0)) \in \{(0,1), (1,0)\}$, which occurs with probability $ps + qr$.
\end{itemize}
Therefore, the survival parameters of the resulting GW$(\beta_0,\beta_1)$-random tree are

\begin{align*}
\beta_0 &= pr + p(1 - r - s) + r(1 - p - q) + (1 - p - q)(1 - r - s)\\
				&= p(1 - s) + (1 - p - q)(1 - s)\\
				&= (1 - s)(1 - q)\\
				&= 1 - q - s + qs.
\end{align*}

and

\begin{align*}
\beta_1 &= qs + q(1 - r - s) + s(1 - p - q) + (1 - p - q)(1 - r - s)\\
				&= q(1 - r) + (1 - p - q)(1 - r)\\
				&= (1 - r)(1 - p)\\
				&= 1 - p - r + pr
\end{align*}
The code $\Gamma(x_P \oplus x_Q)\in 3^\omega$ represents a pruned, infinite GW$(\beta_0,\beta_1)$-random tree. By Lemma \ref{lrb}, the probability that any node in the  tree encoded by $\Gamma(x_P \oplus x_Q)$ has only a left child is $1-\beta_0=p + r - pr$, only a right child is $1-\beta_1=q + s - qs$, and both children is
\begin{align*}
\beta_0 + \beta_1 - 1 &= 1 - q - s + qs + 1 - p - r + pr - 1\\
											&= 1 - (p + r - pr) - (q + s - qs).
\end{align*}
Therefore, $\Gamma(x_P \oplus x_Q)$ is $\mu_{\langle p+r-pr, q+s-qs \rangle}$-random.
\end{proof}

Part (3) of the Intersection Theorem follows directly from the lemma above and the fact that randomness preservation holds for layerwise computable mappings. Finally, Theorem \ref{thm-conv} follows directly by an application of the no-randomness-from-nonrandomness principle.

 \section{Multiple Intersections of Random Closed Sets}\label{sec-multiple}

In the case that we are dealing with two closed sets that are random with respect to the same symmetric Bernoulli measures, i.e. $p=q=r=s$, the key inequality $p+q+r+s<1+pr+qs$ in the Intersection Theorem becomes $4p<1+2p^2$.  Since $2p^2-4p-1=0$ has solutions $p= 1\pm\frac{\sqrt{2}}{2}$, and we are only interested in the case $p=1-\frac{\sqrt{2}}{2}$ (since $1+\frac{\sqrt{2}}{2}>1$), this key inequality is equivalent to the condition $p<1-\frac{\sqrt{2}}{2}$.  This allows us to derive Corollary \ref{cor-cw}, which we restate here for the sake of convenience:

\bigskip

\noindent {\bf Corollary 1}\;(Cenzer/Weber \cite{CenWeb13}){\bf .}
For $p\in[0,1/2]$, let $P,Q\in\K(\cs)$ be relatively $\mu^*_p$-random.
\begin{enumerate}
\item If $p\geq 1-\frac{\sqrt{2}}{2}$, then $P\cap Q=\emptyset$.
\item If $p< 1-\frac{\sqrt{2}}{2}$, then $P\cap Q=\emptyset$ with probability $\frac{2p^2}{(1-2p)^2}$.
\item If $p< 1-\frac{\sqrt{2}}{2}$ and $P\cap Q\neq\emptyset$, then $P\cap Q$ is Martin-L\"of random with respect to the measure $\mu^*_{2p-p^2}$.
\end{enumerate}

We would like to extend this analysis to determine which parameters $p$ allow for the possibility that $n$ $\mu_p^*$-random closed sets have a non-empty intersection for various choices of $n\in\omega$.  Here we need to be more precise: let us say that closed sets $P_1,\dotsc,P_n$ are \emph{mutually $\mu^*_p$-random} if, setting $y_i=\bigoplus_{j\neq i}x_{P_j}$, we have $x_{P_i}\in\MLR^{y_i}_{\mu_p}$.

In order to state our result, we define a sequence of polynomials $(f_n(p))_{n\geq 1}$  by setting $f_n(p)=1 - (1-p)^n$ for $p\in[0,\frac{1}{2}]$.  The desired generalization can thus be stated as follows:

\begin{thm}\label{thm-multint}
For $p\in [0,\frac{1}{2}]$ and $n\geq 2$, given $n$ mutually $\mu^*_p$-random closed sets $P_1,\dotsc,P_n$, the following hold:
\begin{enumerate}
\item If $p\geq 1-\frac{1}{\sqrt[n]{2}}$, then $\bigcap_{i=1}^n P_i=\emptyset$.
\item If $p< 1-\frac{1}{\sqrt[n]{2}}$, then $\bigcap_{i=1}^n P_i=\emptyset$ with probability $1-\frac{1-2f_n(p)}{(1-2p)^n}$.
\item If $p< 1-\frac{1}{\sqrt[n]{2}}$ and $\bigcap_{i=1}^n P_i\neq\emptyset$, then $\bigcap_{i=1}^n P_i$ is Martin-L\"of random with respect to the measure $\mu^*_{f_n(p)}$.
\end{enumerate}
\end{thm}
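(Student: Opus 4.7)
The plan is to proceed by induction on $n$, with the base case $n=2$ being exactly Corollary \ref{cor-cw}. Indeed, $f_2(p) = 1-(1-p)^2 = 2p - p^2$, so part (3) matches; the threshold $1 - 1/\sqrt[2]{2}$ equals $1 - \sqrt{2}/2$; and a short algebraic check shows $1 - (1-2f_2(p))/(1-2p)^2 = 2p^2/(1-2p)^2$.

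For the inductive step, assume the result for $n$ and let $P_1,\dotsc,P_{n+1}$ be mutually $\mu_p^*$-random. By repeated use of Van Lambalgen's theorem (Theorem \ref{thm-vlt}), the codes $x_{P_1},\dotsc,x_{P_n}$ are mutually $\mu_p$-random relative to $x_{P_{n+1}}$, so the inductive hypothesis, relativized to $x_{P_{n+1}}$, applies to $P_1,\dotsc,P_n$. Writing $Q := \bigcap_{i=1}^n P_i$, this yields in the non-trivial case $p<1-1/\sqrt[n]{2}$ that $\Pr(Q\neq\emptyset) = (1-2f_n(p))/(1-2p)^n$ and, when $Q\neq\emptyset$, that $Q$ is $\mu^*_{f_n(p)}$-random relative to $P_{n+1}$.

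When $Q\neq\emptyset$, I would then apply the Intersection Theorem to $Q$ and $P_{n+1}$ with parameters $(p,q)=(f_n(p),f_n(p))$ and $(r,s)=(p,p)$; the required randomness of $P_{n+1}$ relative to $Q$ follows since the code of $Q$ is a layerwise-computable function of $x_{P_1} \oplus \cdots \oplus x_{P_n}$, obtained by iterating Lemma \ref{fit}. The key algebraic identity driving the whole argument is
\[
f_n(p) + p - pf_n(p) = p + f_n(p)(1-p) = p + (1-(1-p)^n)(1-p) = 1 - (1-p)^{n+1} = f_{n+1}(p),
\]
so the Intersection Theorem's inequality $2f_n(p)+2p < 1+2pf_n(p)$ becomes $2f_{n+1}(p) < 1$, equivalent to $p < 1 - 1/\sqrt[n+1]{2}$. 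Part (1) for $n+1$ follows at once: if $p \geq 1 - 1/\sqrt[n+1]{2}$, then either $Q = \emptyset$ already (when $p\geq 1-1/\sqrt[n]{2}$, by induction), or the Intersection Theorem forces $Q\cap P_{n+1}=\emptyset$. Part (3) follows as well, since the output measure from case (3) of the Intersection Theorem is $\mu^*_{\langle f_{n+1}(p),f_{n+1}(p)\rangle} = \mu^*_{f_{n+1}(p)}$.

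For part (2), the strategy is to condition on whether $Q$ is empty. With $\pi_n := (1-2f_n(p))/(1-2p)^n = \Pr(Q\neq\emptyset)$ from the induction and $\Pr(Q\cap P_{n+1}=\emptyset \mid Q\neq\emptyset) = 2pf_n(p)/((1-2f_n(p))(1-2p))$ from the Intersection Theorem, a short calculation using $(1-2f_n(p))(1-2p) - 2pf_n(p) = (1-2p) - 2f_n(p)(1-p) = 1 - 2f_{n+1}(p)$ yields
\[
\Pr\bigl(\textstyle\bigcap_{i=1}^{n+1} P_i = \emptyset\bigr) = 1 - \pi_n + \pi_n \cdot \frac{2pf_n(p)}{(1-2f_n(p))(1-2p)} = 1 - \frac{1-2f_{n+1}(p)}{(1-2p)^{n+1}},
\]
as required. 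The main obstacle is the bookkeeping in the inductive step: checking that mutual randomness of the $n+1$ sets passes, via Van Lambalgen and the layerwise computability of Lemma \ref{fit}, to precisely the hypotheses needed for the two-set Intersection Theorem to apply to $Q$ and $P_{n+1}$. Once that is in place, the two algebraic identities above cause parts (1), (2), and (3) to fall out simultaneously.
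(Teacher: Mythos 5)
Your proposal is correct and follows essentially the same route as the paper: induction on $n$ with base case Corollary \ref{cor-cw}, applying the two-set Intersection Theorem to $\bigcap_{i=1}^{n}P_i$ and $P_{n+1}$ via the identities $f_{n+1}(p)=p+f_n(p)-pf_n(p)$ and $f_n(p)<\tfrac{1}{2}\iff p<1-\tfrac{1}{\sqrt[n]{2}}$, and the same conditioning computation for part (2). If anything, you are more explicit than the paper about the relative-randomness bookkeeping (relativizing the inductive hypothesis to $x_{P_{n+1}}$ and using layerwise computability of the code of the partial intersection), which the paper's proof passes over silently.
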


%
%

%

In order to prove Theorem \ref{thm-multint}, we make use of several lemmas:
\begin{lem} \label{lem-poly1}
For $p\in[0,\frac{1}{2}]$, the following recursive relation holds:
\begin{itemize}
\item $f_1(p)=p$;
\item $f_{n+1}(p) = p + f_n(p) - p f_n(p)$.
\end{itemize}
\end{lem}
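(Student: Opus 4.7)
The plan is to verify both items directly from the definition $f_n(p) = 1 - (1-p)^n$, since the lemma is a pure algebraic identity rather than a substantive claim about random closed sets.

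First, I would handle the base case by a one-line substitution: plugging $n=1$ into $f_n(p) = 1 - (1-p)^n$ gives $f_1(p) = 1 - (1-p) = p$, matching the first bullet.

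For the recursion, I would factor one copy of $(1-p)$ out of $(1-p)^{n+1}$ and rearrange. Concretely, starting from
\[
f_{n+1}(p) = 1 - (1-p)^{n+1} = 1 - (1-p)(1-p)^n = 1 - (1-p)^n + p(1-p)^n,
\]
I would then recognize the first two terms as $f_n(p)$ and rewrite $(1-p)^n = 1 - f_n(p)$, yielding
\[
f_{n+1}(p) = f_n(p) + p\bigl(1 - f_n(p)\bigr) = p + f_n(p) - p\,f_n(p),
\]
which is the desired identity.

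There is no genuine obstacle here; the whole argument is a short computation valid for all real $p$ (the restriction $p \in [0, \tfrac{1}{2}]$ plays no role in the algebra, though it of course matters for later probabilistic interpretations of $f_n(p)$). The only stylistic choice is whether to write the verification as a direct chain of equalities or to phrase the inductive step as an induction on $n$; since the identity $(1-p)^{n+1} = (1-p)(1-p)^n$ makes induction unnecessary, I would present it as a single algebraic derivation.
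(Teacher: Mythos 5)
Your proof is correct and matches the paper's approach in substance: the paper simply states that the identity "follows immediately by induction on $n$," and your direct computation from $f_n(p)=1-(1-p)^n$, using $(1-p)^{n+1}=(1-p)(1-p)^n$, is exactly the elementary verification being left implicit. Your observation that the restriction to $p\in[0,\tfrac{1}{2}]$ is irrelevant to the algebra is also accurate.
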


\begin{proof} This follows immediately by induction on $n\in\omega$.
\end{proof}

\begin{lem}\label{lem-poly2}
For $n\geq 1$ and $p\in[0,\frac{1}{2}]$, $f_n(p)<\frac{1}{2}$ if and only if $p< 1-\frac{1}{\sqrt[n]{2}}$.
\end{lem}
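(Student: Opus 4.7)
The proof will be a direct algebraic manipulation using the explicit formula $f_n(p) = 1 - (1-p)^n$ (rather than the recursion from Lemma~\ref{lem-poly1}, which would require an induction that is unnecessary here). The plan is to chain equivalences: $f_n(p) < \tfrac{1}{2}$ iff $1 - (1-p)^n < \tfrac{1}{2}$ iff $(1-p)^n > \tfrac{1}{2}$, and then take $n$-th roots to get $1-p > 2^{-1/n}$, i.e., $p < 1 - 1/\sqrt[n]{2}$.

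The only subtlety is justifying that taking $n$-th roots preserves the inequality, which requires knowing that both sides are nonnegative. Since $p\in[0,\tfrac{1}{2}]$, we have $1-p\in[\tfrac{1}{2},1]$, so $(1-p)^n \geq 0$, and the function $t\mapsto t^{1/n}$ is strictly increasing on $[0,\infty)$. Thus the equivalence is preserved in both directions.

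I expect no real obstacle: this is a one-line computation once the explicit form of $f_n$ is invoked. The main point of the lemma is simply to translate the threshold condition $p < 1 - 1/\sqrt[n]{2}$ appearing in the statement of Theorem~\ref{thm-multint} into the algebraically more tractable condition $f_n(p) < \tfrac{1}{2}$, which will presumably be what arises naturally when analyzing mutual intersections via the Intersection Theorem inductively (since $\mu^*_{f_n(p)}$ is the measure describing the intersection, and $f_n(p) < \tfrac{1}{2}$ is exactly the constraint needed for $\mu^*_{f_n(p)}$ to be a well-defined symmetric Bernoulli measure with parameter in $[0,\tfrac{1}{2}]$, matching the hypothesis of the Intersection Theorem at each inductive step).
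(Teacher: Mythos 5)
Your proof is correct and matches the paper's approach: the paper also works directly from the closed form $f_n(p)=1-(1-p)^n$ and simply states that the equivalence with $p<1-\frac{1}{\sqrt[n]{2}}$ is straightforward to verify. Your write-up just makes explicit the monotonicity justification for taking $n$-th roots, which the paper leaves implicit.
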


\begin{proof}
As $f_n(p)=1-(1-p)^n$, it is straightforward to verify that
\[
1-(1-p)^n<\frac{1}{2} \;\text{ if and only if }\; p < 1-\frac{1}{\sqrt[n]{2}}.\qedhere
\]
\end{proof}

\begin{proof}[Proof of Theorem \ref{thm-multint}]
We proceed via induction on  $n\geq 2$.  For the case $n=2$, noting that $f_1(p)=p$ and $f_2(p)=2p-p^2$, this case is established by Corollary \ref{cor-cw}.  In particular, it is straightforward to verify for part (2) that 
\[
\dfrac{2p^2}{(1-2p)^2}=1-\dfrac{1-2f_2(p)}{(1-2p)^2}.
\]

Assuming the result holds for $n\geq 2$, we consider $n+1$ mutually $\mu^*_p$-random closed sets $P_1,\dotsc,P_{n+1}$.  To verify (1), suppose that $p\geq 1-\frac{1}{\sqrt[n+1]{2}}$.  Assume for the sake of contradiction that $\bigcap_{i=1}^{n+1} P_i\neq\emptyset$.  Then by the inductive hypothesis, $\bigcap_{i=1}^n P_i$ is a $\mu^*_{f_n(p)}$-random closed set.  Then in order to apply
part (1) of the Intersection Theorem to $\bigcap_{i=1}^{n} P_i$ and $P_{n+1}$, where $p=q$ and $r=s=f_n(p)$, we must have 
\[
2p +2f_n(p)< 1+2pf_n(p),
\]
or equivalently, by Lemma \ref{lem-poly1},
\[
f_{n+1}(p)=p+f_n(p)-pf_n(p)< \frac{1}{2}.
\]
By Lemma \ref{lem-poly2}, this implies that $p< 1-\frac{1}{\sqrt[n+1]{2}}$, which contradicts our earlier assumption.  Thus we must have $\bigcap_{i=1}^{n+1} P_i=\emptyset$.

To verify (2), suppose that $p<1-\frac{1}{\sqrt[n+1]{2}}$.  By Lemma \ref{lem-poly2}, this implies that $f_{n+1}(p)<\frac{1}{2}$.  As we have argued in the verification of (1), this in turn implies that $2p+2f_n(p)<1+2pf_n(p)$, and so we can apply part (2) of the Intersection Theorem. By our inductive hypothesis, $\bigcap_{i=1}^{n} P_i\neq\emptyset$ with probability $\frac{1-2f_n(p)}{(1-2p)^n}$ and, in the case that $\bigcap_{i=1}^{n} P_i\neq\emptyset$, it is $\mu^*_{f_n(p)}$-random.  Conditional on the assumption that $\bigcap_{i=1}^{n} P_i\neq\emptyset$, by part (2) of the Intersection Theorem we have $\bigcap_{i=1}^{n+1} P_i\neq\emptyset$ with probability
\[
1-\dfrac{2pf_n(p)}{(1-2p)(1-2f_n(p))}.
\]
Thus, the probability that $\bigcap_{i=1}^{n+1} P_i\neq\emptyset$ is
\[
\dfrac{1-2f_n(p)}{(1-2p)^n}\left(1-\dfrac{2pf_n(p)}{(1-2p)(1-2f_n(p))}\right)=\dfrac{1-2f_n(p)-2p+4pf_n(p)}{(1-2p)^{n+1}}=\dfrac{1-2f_{n+1}(p)}{(1-2p)^{n+1}},
\]
where the last equality follows from Lemma \ref{lem-poly1}.  (2) immediately follows.

To verify (3), suppose that $\bigcap_{i=1}^{n+1} P_i\neq\emptyset$.  Since  $\bigcap_{i=1}^{n} P_i\neq\emptyset$, it follows from the induction hypothesis that $\bigcap_{i=1}^{n} P_i$ is $\mu^*_p$-random.  Then applying (3) of the Intersection Theorem to the case $p=q$ and $r=s=f_n(p)$, $\bigcap_{i=1}^{n+1} P_i$ is random with respect to the measure $\mu^*_{p+f_n(p)-pf_n(p)}$, which, by Lemma \ref{lem-poly1}, is the measure $\mu^*_{f_{n+1}(p)}$.  
\end{proof}

%
%

For $n\geq 1$, since $f_n(0)=0$ and $f_n(\frac{1}{2})=1-(\frac{1}{2})^n$, and $f'_n(p)=n(1-p)^{n-1}$,  $f_n\colon [0,\frac{1}{2}]\rightarrow[0,1-(\frac{1}{2})^n]$ is strictly increasing.  Thus we can define $f^{-1}_n(p)=1-\sqrt[n]{1-p}$ on $[0,1-(\frac{1}{2})^n]$.  Using the functions $f_n^{-1}$, we can obtain a converse to part (3) of Theorem \ref{thm-multint}.  

There is, however, one additional wrinkle.  We would like to prove the result by induction on the number of closed sets in the desired intersection.  To do so, we need an additional hypothesis about the relative randomness of the closed sets over which we will take the intersection.  To prove the unrelativized version of our result, we use a relativized version of the result in the inductive step.  

For $z\in 3^\omega$, let us say that closed sets $P_1,\dotsc, P_n$ are \emph{mutually $\mu^*_p$-random relative to $z$} if, for each $i\in\{1,\dotsc,n\}$, setting
$y_i=\bigoplus_{j\neq i}x_{P_j}$, we have $x_{P_i}\in\MLR^{y_i\oplus z}_{\mu_p}$.  We also make use of the following consequence of van Lambalgen's theorem:  If $x_1,x_2,\dotsc,x_n$ are mutually $\mu_p$-random relative to $z$ and $z$ is $\mu_q$-random, then $z$ is $\mu_q$-random relative to $\bigoplus_{i=1}^nx_i$.  Lastly, we will make use of the following relativized version of Theorem \ref{thm-conv}, which follows from a direct relativization of the proof of Theorem \ref{thm-conv}:  

\begin{thm}\label{thm-intrel}
Suppose that $p, q, r, s\geq 0$, $0\leq p + q\leq 1$ and $0\leq r + s\leq1$.  If $R\in\K(\cs)$ is $\mu^*_{\langle p+r-pr,q+s-qs \rangle}$-Martin-L\"of random relative to $z\in 3^\omega$, then there are 
$P,Q\in \K(\cs)$ such that
\begin{itemize}
\item[(i)] $P$ is $\mu^*_{\langle p,q\rangle}$-random relative to $x_Q\oplus z$,
\item[(ii)] $Q$ is  $\mu^*_{\langle r,s\rangle}$-random relative to $x_P\oplus z$, and
\item[(iii)] $R=P\cap Q$.
\end{itemize}
\end{thm}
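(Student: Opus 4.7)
The plan is to mirror the proof of Theorem \ref{thm-conv} with every invocation of randomness replaced by randomness relative to $z$. First I would observe that, assuming $p, q, r, s$ are computable, the map $\Gamma = \Phi \circ \Psi \colon 3^\omega \to 4^\omega$ constructed in the proof of Lemma \ref{fit} is computable, and the Martin-L\"of test $(W_n)_{n\in\omega}$ witnessing its layerwise computability is a plain (unrelativized) Martin-L\"of test. Since a computable Martin-L\"of test is a Martin-L\"of test relative to any $z \in 3^\omega$, the map $\Gamma$ is automatically $z$-layerwise computable. Consequently the conclusion of Lemma \ref{fit} relativizes word-for-word: whenever $x_P \oplus x_Q \in \MLR^z_{\mu_{\langle p,q\rangle} \oplus \mu_{\langle r,s\rangle}}$ and the corresponding closed sets have non-empty intersection, $\Gamma(x_P \oplus x_Q)$ is a $\mu_{\langle p+r-pr,\, q+s-qs\rangle}$-random code for $P \cap Q$ relative to $z$.

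Next, I would start from $R \in \K(\cs)$ that is $\mu^*_{\langle p+r-pr,\, q+s-qs\rangle}$-random relative to $z$ and fix a code $x_R \in 3^\omega$ of $R$ that is $\mu_{\langle p+r-pr,\, q+s-qs\rangle}$-random relative to $z$. Applying the relativized version of the no-randomness-from-nonrandomness principle (Theorem \ref{prnr}(2)) to the map $\Gamma$ with oracle $z$, we obtain some $x \in \MLR^z_{\mu_{\langle p,q\rangle} \oplus \mu_{\langle r,s\rangle}}$ with $\Gamma(x) = x_R$. Writing $x = x_P \oplus x_Q$ and setting $P = \Theta(x_P)$, $Q = \Theta(x_Q)$, the explicit definition of $\Gamma$ ensures that $\Gamma(x_P \oplus x_Q) \in 3^\omega$ codes $P \cap Q$ exactly when $P \cap Q \neq \emptyset$; since $x_R \in 3^\omega$, this yields $R = P \cap Q$, establishing (iii).

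To obtain (i) and (ii), I would apply van Lambalgen's theorem (Theorem \ref{thm-vlt}) relativized to $z$. From $x_P \oplus x_Q \in \MLR^z_{\mu_{\langle p,q\rangle} \oplus \mu_{\langle r,s\rangle}}$, one direction of van Lambalgen gives $x_P \in \MLR^{x_Q \oplus z}_{\mu_{\langle p,q\rangle}}$, which is (i); applying van Lambalgen a second time after swapping the roles of $x_P$ and $x_Q$ yields $x_Q \in \MLR^{x_P \oplus z}_{\mu_{\langle r,s\rangle}}$, which is (ii). The only point requiring any care, and hence the main (minor) obstacle, is verifying by direct inspection of the constructions of $(V_n)$ in Lemma \ref{lwc} and $(W_n) = \Psi^{-1}(V_n)$ in Lemma \ref{fit} that these tests depend only on the computable parameters $p,q,r,s$ and on the computable maps $\Psi,\Phi$, with no hidden dependence on an oracle that would obstruct the relativization.
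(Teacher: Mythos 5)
Your proposal is correct and is essentially the paper's own argument: the paper proves Theorem \ref{thm-intrel} precisely by ``a direct relativization of the proof of Theorem \ref{thm-conv},'' i.e.\ observing that $\Gamma=\Phi\circ\Psi$ is layerwise computable via an unrelativized test and hence $z$-layerwise computable, then applying the relativized no-randomness-from-nonrandomness principle and the relativized van Lambalgen theorem exactly as you do. The only nitpick is that the no-randomness-from-nonrandomness step should cite the version for layerwise computable maps (Hoyrup--Rojas, as used in the proof of Theorem \ref{thm-conv}) rather than Theorem \ref{prnr}(2) for Turing functionals, but this does not affect the argument.
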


Now we state our partial converse of Theorem \ref{thm-multint}:

\begin{thm}
For $p\in[0,\frac{1}{2}]$, suppose that $Q\in\K(\cs)$ is $\mu^*_p$-random relative to $z\in 3^\omega$.  Then for $n\geq 2$, there are $P_1,\dotsc,P_n\in\K(\cs)$ that  are $\mu^*_{f^{-1}_n(p)}$-random relative to $z$  such that  $Q=\bigcap_{i=1}^nP_i$
\end{thm}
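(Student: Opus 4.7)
The plan is to proceed by induction on $n \geq 2$, using the relativized converse of the Intersection Theorem (Theorem \ref{thm-intrel}) as the engine for splitting a random closed set into a pair of mutually random closed sets with the given intersection, and then recursively splitting one factor.

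For the base case $n=2$, set $q = f_2^{-1}(p)$, so that $f_2(q) = 2q - q^2 = p$. Apply Theorem \ref{thm-intrel} to $Q$ (which is $\mu^*_p$-random relative to $z$) with all four parameters equal to $q$: this yields $P_1, P_2 \in \K(\cs)$ such that $P_1$ is $\mu^*_q$-random relative to $x_{P_2}\oplus z$, $P_2$ is $\mu^*_q$-random relative to $x_{P_1}\oplus z$, and $Q = P_1 \cap P_2$, which is exactly mutual $\mu^*_q$-randomness relative to $z$ in the two-set case.

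For the inductive step, suppose the result holds for some $n \geq 2$, and let $Q$ be $\mu^*_p$-random relative to $z$. Set $q = f_{n+1}^{-1}(p)$ and $p' = f_n(q)$; by Lemma \ref{lem-poly1}, $p = q + p' - qp' = f_{n+1}(q)$. Applying Theorem \ref{thm-intrel} to $Q$ with the first parameter pair equal to $q$ and the second equal to $p'$, one obtains closed sets $P_{n+1}$ and $R$ such that $P_{n+1}$ is $\mu^*_q$-random relative to $x_R \oplus z$, $R$ is $\mu^*_{p'}$-random relative to $x_{P_{n+1}} \oplus z$, and $Q = P_{n+1} \cap R$. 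Since $f_n^{-1}(p') = q$, the inductive hypothesis applied to $R$ with oracle $x_{P_{n+1}} \oplus z$ yields $P_1, \dotsc, P_n$ mutually $\mu^*_q$-random relative to $x_{P_{n+1}} \oplus z$ with $R = \bigcap_{i=1}^n P_i$. Hence $Q = \bigcap_{i=1}^{n+1} P_i$ as required.

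The one step that deserves care is promoting mutual $\mu^*_q$-randomness of $P_1, \dotsc, P_n$ relative to $x_{P_{n+1}} \oplus z$ to mutual $\mu^*_q$-randomness of $P_1, \dotsc, P_{n+1}$ relative to $z$ alone. The plan here is to invoke van Lambalgen's theorem (Theorem \ref{thm-vlt}) iteratively. The mutual randomness produced by the inductive hypothesis is equivalent to $x_{P_1} \oplus \dotsb \oplus x_{P_n}$ being $(\mu_q)^n$-random relative to $x_{P_{n+1}} \oplus z$; meanwhile, since $P_{n+1}$ is $\mu^*_q$-random relative to $x_R \oplus z$, it remains $\mu^*_q$-random relative to $z$ upon dropping $x_R$ from the oracle. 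Applying van Lambalgen to the pair $(x_{P_1} \oplus \dotsb \oplus x_{P_n}, x_{P_{n+1}})$ over the base oracle $z$ then shows that $x_{P_1} \oplus \dotsb \oplus x_{P_{n+1}}$ is $(\mu_q)^{n+1}$-random relative to $z$, which by the same iterated van Lambalgen is equivalent to the desired mutual randomness of $P_1, \dotsc, P_{n+1}$ relative to $z$. Beyond this bookkeeping, no essential difficulty is anticipated: the algebraic identity $p = q + f_n(q) - q f_n(q)$ is precisely what makes Theorem \ref{thm-intrel} applicable at each stage, and the rest is routine.
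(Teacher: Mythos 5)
Your proposal is correct and follows essentially the same route as the paper: induction on $n$, splitting off one factor via the relativized converse of the Intersection Theorem (Theorem \ref{thm-intrel}) with parameters $q=f_{n+1}^{-1}(p)$ and $f_n(q)$, applying the inductive hypothesis to the remaining factor with the enlarged oracle, and then using van Lambalgen's theorem to recover mutual randomness of all $n+1$ sets. If anything, your bookkeeping of the oracle $z$ (invoking Theorem \ref{thm-intrel} in the inductive step and tracking $z$ through the van Lambalgen argument) is slightly more careful than the paper's write-up, which cites the unrelativized Theorem \ref{thm-conv} there and suppresses $z$ in places.
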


\begin{proof}
Again we proceed by induction.  For $n=2$, this follows from Theorem \ref{thm-intrel}.  Suppose now that the result holds for a fixed $n\geq 2$ and all $z\in\cs$.  Let $q=f^{-1}_{n+1}(p)$, so that $f_{n+1}(q)=p$.  In particular, by Lemma \ref{lem-poly1} we have $p=f_{n+1}(q)=q+f_n(q)-qf_n(q)$.  By Theorem \ref{thm-conv}, there are $P_1\in\K(\cs)$ and $R\in\K(\cs)$ such that $Q=P_1\cap R$, $P_1$ is $\mu^*_q$-random relative to $x_R$, and $R$ is $\mu^*_{f_n(q)}$-random relative to $x_{P_1}$.  By the inductive hypothesis, since $R$ is $\mu^*_{f_n(q)}$-random and $f^{-1}_n(f_n(q))=q$, there are  $P_2,\dotsc,P_{n+1}\in\K(\cs)$ that are mutually $\mu^*_q$-random relative to $x_{P_1}$ such that $R=\bigcap_{i=2}^{n+1}P_i$.  By the consequence of van Lambalgen's theorem discussed above, $x_{P_1}$ is $\mu_q$-random relative to $\bigoplus_{i=2}^{n+1}x_{P_i}$, and hence the sequence $P_1,P_2,\dotsc,P_{n+1}$ is mutually $\mu^*_q$-random.  Moreover, $Q=P_1\cap R=\bigcap_{i=1}^{n+1}P_i$, which yields the desired conclusion.
\end{proof}

\section{Multiple Intersections and Effective Dimension}\label{sec-dimension}

In this final section, we relate the effective dimension of a sequence $x$ with what we call the degree of intersectability of a family of random closed sets, at least one of which contains $x$, drawing upon a result of Diamondstone and Kjos-Hanssen  \cite{DiaKjo12} on the relationship between effective dimension and members of random closed sets.

Recall from our discussion at the beginning of Section \ref{subsec-gw} that Diamondstone and Kjos-Hanssen studied the collection of random closed sets obtained as the set of paths through certain Galton-Watson trees.  For $\gamma\in[0,1)$, they further defined the relation ``$x$ \text{is a} {\sc Member}$_\gamma$'' for $x\in\cs$ to mean that $x$ is in a random closed set corresponding to the survival parameter $2^{-\gamma}$ (that is, each node of the full binary tree is removed with probability $1-2^{-\gamma}$).  Moreover, they proved:

\begin{thm}[Diamondstone, Kjos-Hanssen \cite{DiaKjo12}]\label{thm-dkh}
For $x\in\cs$,
\[
\dim(x)>\gamma \;\Rightarrow\; x \text{ is a {\sc Member}$_\gamma$} \;\Rightarrow\; \dim(x)\geq\gamma.
\]
\end{thm}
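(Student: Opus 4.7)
The plan is to prove the two implications separately, in both cases using Theorem \ref{gwiffrand} to translate $\mu_p^*$-random closed sets (for $p = 1-2^{-\gamma}$) into GW-random infinite trees $T$ with survival parameter $\beta = 2^{-\gamma}$, coded by $\nu$-Martin-L\"of random $y \in 4^\omega$, together with Mayordomo's identity $\dim(x) = \liminf_n C(x \uh n)/n$.

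For the implication ``$x$ is \textsc{Member}$_\gamma \Rightarrow \dim(x) \geq \gamma$'', I fix $\epsilon > 0$ and consider $A_n = \{\sigma \in 2^n : C(\sigma) < (\gamma - \epsilon) n\}$, a uniformly $\Sigma^0_1$ family with $|A_n| \leq 2^{(\gamma - \epsilon) n}$. A fixed $\sigma$ of length $n$ belongs to a GW-random tree with probability $\beta^n = 2^{-\gamma n}$, since each of its $n$ non-root prefixes must independently survive. A union bound then gives that the effectively open set $U_n = \{y \in 4^\omega : S_y \cap A_n \neq \emptyset\}$ has $\nu$-measure at most $2^{-\epsilon n}$, yielding (after thinning) a $\nu$-Martin-L\"of test. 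Any $\nu$-random $y$ escapes $U_n$ for almost all $n$, so $C(x \uh n) \geq (\gamma - \epsilon) n$ eventually for every $x \in [S_y]$, giving $\dim(x) \geq \gamma - \epsilon$. Letting $\epsilon \downarrow 0$ closes this direction.

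For the reverse implication, given $x$ with $\dim(x) > \gamma$, the plan is to couple $x$ with an independent $\nu$-random $z \in 4^\omega$ to produce a code $y \in 4^\omega$ of an infinite tree $S_y$ containing $x$. Informally, one enumerates the extendible nodes of the tree under construction in the standard order: at each node lying on the path $x$, the next code symbol is chosen (conditional on $z$'s bit for the non-$x$-child) from $\{b,2\}$, where $b$ is the $x$-compatible direction, thereby forcing the $x$-compatible child to survive; at every other node, the corresponding symbol of $z$ is copied verbatim. Then $x \in [S_y]$ by construction, and the desired random closed set is $[S_y]$---provided $y$ itself is $\nu$-Martin-L\"of random, whence one appeals to no-randomness-from-non-randomness to extract the witnessing GW-random tree.

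The main obstacle is establishing precisely this randomness of $y$. The coupling is really a sample from the conditional measure $\nu(\cdot \mid x \in S_y)$, whose Radon-Nikodym density against $\nu$ at the level-$n$ truncation of $x$ is $\beta^{-n} = 2^{\gamma n}$ (each forced survival contributing one factor of $\beta^{-1}$). The hypothesis $\dim(x) > \gamma$ is designed to absorb this factor, because $C(x \uh n) > \gamma n$ for almost all $n$ means the forcing positions at depth $n$ can be described in fewer bits than the density costs. I expect to formalise this by casting the coupling as a layerwise-computable map in the style of Lemma \ref{lwc} and \cite{BieHoyShe17}, and then reducing any hypothetical $\nu$-Martin-L\"of test capturing $y$ to a test capturing $z$ by using $x$ as auxiliary information, thereby contradicting the randomness of $z$. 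Making this reduction quantitative, and in particular coping with the $\liminf$ (rather than $\lim$) in the definition of $\dim(x)$, is where I expect the technical heart of the argument to lie.
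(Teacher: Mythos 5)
First, note that the paper does not prove Theorem \ref{thm-dkh}: it is quoted from Diamondstone and Kjos-Hanssen \cite{DiaKjo12}, so there is no internal proof to compare yours against, and I can only assess your argument on its own terms. Your proof of the implication ``\textsc{Member}$_\gamma$ $\Rightarrow$ $\dim(x)\geq\gamma$'' is essentially correct and complete: the sets $U_n=\{y: S_y\cap A_n\neq\emptyset\}$ are uniformly $\Sigma^0_1$ (given that $\gamma$ is computable, which the framework presupposes), the bound $\nu(U_n)\leq 2^{(\gamma-\epsilon)n}\beta^n=2^{-\epsilon n}$ is right, and passing from ``$y$ passes the thinned test'' to ``$y\notin U_n$ for almost all $n$'' is the routine Solovay-test step you gesture at; together with Theorem \ref{gwiffrand} this direction goes through.

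The implication ``$\dim(x)>\gamma\Rightarrow x$ is a \textsc{Member}$_\gamma$'' is where the entire content of the theorem lies, and your proposal leaves exactly that step open. The spine coupling you describe does produce a code $y=F(x,z)$ with $x\in[S_y]$, and when $z$ is $\nu$-random relative to $x$, relativized preservation of randomness gives that $y$ is random with respect to the spine (size-biased) measure $\nu_x$, relative to $x$. But the theorem needs $y$ to be $\nu$-random outright, and $\nu_x$ and $\nu$ are mutually singular ($\nu_x$ is concentrated on the $\nu$-null set of codes of trees through $x$), with level-$n$ likelihood ratio $2^{\gamma n}$ unbounded; so no off-the-shelf randomness-conservation or layerwise-computability result applies, and ``reducing a $\nu$-test capturing $y$ to a test capturing $z$ using $x$ as auxiliary information'' is precisely the statement to be proved, not a reduction you have exhibited. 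Concretely, what is missing is the change-of-measure step: given a $\nu$-test component $U$ with $\nu(U)\leq 2^{-k}$, one must bound $\nu_\sigma(U)$ for all but a small, effectively enumerable set of potential spines $\sigma\in 2^n$, show that the exceptional $\sigma$ are compressible, and then invoke $C(x\uh n)\geq(\gamma+\delta)n$ for all large $n$ to conclude that $x\uh n$ is never exceptional --- a counting/averaging (size-biasing) argument with delicate quantifier bookkeeping, which is the actual heart of the Diamondstone--Kjos-Hanssen proof. (Incidentally, the $\liminf$ is not the obstacle you should worry about: $\dim(x)>\gamma$ already yields $C(x\uh n)\geq(\gamma+\delta)n$ for all sufficiently large $n$; the obstacle is the quantitative test conversion just described.) As written, this direction is a plan rather than a proof.
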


How does this relate to our present work? As discussed in Section \ref{subsec-gw} above, a random GW-tree with survival parameter $2^{-\gamma}$ is a $\mu_p^*$-random closed set with $p=1-2^{-\gamma}$.  Solving for $\gamma$, we get $\gamma=-\log(1 -p)$.  From Theorem \ref{thm-dkh} we can immediately conclude the following:
\begin{cor}\label{cor-dim}
Let $x\in\cs$.

\medskip

\begin{enumerate}
\item For $p\in[0,\frac{1}{2}]$,
\[
\dim(x)>-\log(1-p) \;\Rightarrow\; x\in P \text{ for some } P\in\MLR_{\mu^*_p} \;\Rightarrow\; \dim(x)\geq-\log(1-p).
\]
\item In the case that $p=1-\frac{1}{\sqrt[n]{2}}$, we have
\[
\dim(x)>\dfrac{1}{n} \;\Rightarrow\; x\in P \text{ for some } P\in\MLR_{\mu^*_p} \;\Rightarrow\; \dim(x)\geq\dfrac{1}{n}.
\]
\end{enumerate}
\end{cor}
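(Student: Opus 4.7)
The plan is to observe that Corollary \ref{cor-dim} is essentially a reparametrization of Theorem \ref{thm-dkh} combined with the equivalence between Galton--Watson random trees and $\mu^*_p$-random closed sets supplied by Theorem \ref{gwiffrand}. No new probabilistic content is needed; the work is in matching up the two coordinate systems for the parameter.

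First I would specialize Theorem \ref{gwiffrand} to the symmetric case $\beta_0 = \beta_1 = \beta$. This yields that a closed set $C \in \K(\cs)$ is the path set of an infinite GW$(\beta,\beta)$-random tree if and only if it is $\mu^*_{\langle 1-\beta,\, 1-\beta\rangle}$-random, i.e., $\mu^*_{1-\beta}$-random. Thus the $\gamma$-parametrization used by Diamondstone and Kjos-Hanssen (survival probability $2^{-\gamma}$) and the $p$-parametrization used here are related by the substitution $\beta = 2^{-\gamma}$, equivalently $p = 1 - 2^{-\gamma}$, equivalently $\gamma = -\log(1-p)$ (with $\log$ base $2$). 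In particular, the predicate ``$x$ is a {\sc Member}$_\gamma$'' is logically equivalent, under this substitution, to the predicate ``$x \in P$ for some $P \in \MLR_{\mu^*_p}$''.

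Part (1) then follows by literal substitution into Theorem \ref{thm-dkh}: replacing $\gamma$ by $-\log(1-p)$ in the chain
\[
\dim(x) > \gamma \;\Rightarrow\; x \text{ is a {\sc Member}}_\gamma \;\Rightarrow\; \dim(x) \geq \gamma
\]
yields precisely the desired implications. For part (2), specialize to $p = 1 - 1/\sqrt[n]{2} = 1 - 2^{-1/n}$, so that $\gamma = -\log(2^{-1/n}) = 1/n$; plugging into part (1) gives the stated implications with $1/n$ in place of $-\log(1-p)$.

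I do not expect a real obstacle: the argument is a change of variables on top of the two cited theorems. The one thing to double-check is a minor bookkeeping point about atoms, namely that Diamondstone and Kjos-Hanssen's GW-random closed sets include the empty set as an atom of positive weight (as recorded in Lemma \ref{inftree} and the discussion afterward), so that ``$x$ is a {\sc Member}$_\gamma$'' should be read as ``$x$ belongs to some \emph{nonempty} GW$(2^{-\gamma},2^{-\gamma})$-random closed set''; under this reading, which is the intended one, Theorem \ref{gwiffrand} gives the equivalence with membership in a $\mu^*_p$-random closed set in $\K(\cs)$, and the corollary drops out.
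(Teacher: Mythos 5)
Your proposal is correct and matches the paper's own argument: the paper likewise obtains part (1) by noting that a GW-random tree with survival parameter $2^{-\gamma}$ corresponds to a $\mu^*_p$-random closed set with $p=1-2^{-\gamma}$ (the symmetric case of Theorem \ref{gwiffrand}) and substituting $\gamma=-\log(1-p)$ into Theorem \ref{thm-dkh}, and part (2) by computing $-\log(1-p)=\frac{1}{n}$ when $p=1-\frac{1}{\sqrt[n]{2}}$. Your remark about reading ``{\sc Member}$_\gamma$'' as membership in a nonempty GW-random closed set is a fair bookkeeping point that the paper handles implicitly in the discussion following Lemma \ref{lwc}.
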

\noindent Note that (2) follows immediately from (1), since in the case that $p=1-\frac{1}{\sqrt[n]{2}}$, we have 
\[
-\log(1-p)=\dfrac{1}{n}.
\]

We now relate this to our results on multiple intersections from the previous section.  Let us define the \emph{degree of intersectability} of the family of $\mu_p^*$-random closed sets for a fixed $p\in[0,1/2]$ to be the unique $n$ such that (i) $n$ mutually $\mu_p^*$-random closed sets can have a non-empty intersection and (ii) $n+1$ mutually $\mu_p^*$-random closed sets always have an empty intersection (we know such an $n$ exists by Theorem \ref{thm-multint}).\footnote{We count intersectability in terms of the number of mutually random closed sets that can yield a non-empty intersection, not the number of times we can apply an intersection to a collection of mutually random closed sets to yield a non-empty set.  On the latter approach, in all of the results that follow, the value $n$ would need to be replaced with $n+1$.}  Note that for a family of random closed sets to have degree of intersectability equal to 1, this means that no pair of relatively random closed sets from the family have a non-empty intersection.  The following lemma can be derived from Corollary \ref{cor-cw} and Theorem \ref{thm-multint}.

\begin{lem}{\ }\label{lem-degint} Let $p\in[0,1/2]$.
\begin{enumerate}
\item The family of $\mu_p^*$-random closed sets has degree of intersectability equal to 1 if and only if  $p\in[1 - \frac{1}{\sqrt{2}}, \frac{1}{2}]$.  
\item The family of $\mu_p^*$-random closed sets has degree of intersectability equal to $n\geq 2$ if and only if
\[
p\in\Biggl[1-\dfrac{1}{\sqrt[n+1]{2}},1-\dfrac{1}{\sqrt[n]{2}}\Biggr).
\]
\end{enumerate}
\end{lem}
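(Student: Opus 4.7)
The plan is to derive Lemma \ref{lem-degint} directly from Corollary \ref{cor-cw} and Theorem \ref{thm-multint}; no new techniques are required, but some bookkeeping is needed to see that the cases fit together. The key structural observation is that the function $n\mapsto 1-\tfrac{1}{\sqrt[n]{2}}$ is strictly decreasing in $n$, mapping $n=1$ to $\tfrac{1}{2}$ and tending to $0$ as $n\to\infty$, so that the half-open intervals $\bigl[1-\tfrac{1}{\sqrt[n+1]{2}},\,1-\tfrac{1}{\sqrt[n]{2}}\bigr)$ for $n\geq 2$, together with $\bigl[1-\tfrac{1}{\sqrt{2}},\,\tfrac{1}{2}\bigr]$, partition $(0,\tfrac{1}{2}]$. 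This ensures that the two cases of the lemma do not overlap and account for the same $p$-values as the two cases of Theorem \ref{thm-multint}.

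For part (1), I would first note that any $\mu_p^*$-random closed set is by definition an element of $\K(\cs)$ and hence non-empty, so the degree of intersectability is always at least $1$. By Corollary \ref{cor-cw}(1), if $p\geq 1-\tfrac{1}{\sqrt{2}}$, then any two relatively $\mu_p^*$-random closed sets have empty intersection, so the degree equals exactly $1$. Conversely, if $p<1-\tfrac{1}{\sqrt{2}}$, then Corollary \ref{cor-cw}(2) gives a positive probability that two relatively $\mu_p^*$-random sets have non-empty intersection, so in particular such a pair exists and the degree is at least $2$. Combining these two directions yields (1).

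For part (2), I would fix $n\geq 2$ and apply Theorem \ref{thm-multint} in the same manner. If $p\geq 1-\tfrac{1}{\sqrt[n+1]{2}}$, then by Theorem \ref{thm-multint}(1) every collection of $n+1$ mutually $\mu_p^*$-random closed sets has empty intersection, so the degree is at most $n$. If $p<1-\tfrac{1}{\sqrt[n]{2}}$, then by Theorem \ref{thm-multint}(2) the probability that $n$ mutually $\mu_p^*$-random closed sets have non-empty intersection equals $\tfrac{1-2f_n(p)}{(1-2p)^n}$, which is strictly positive, so such an $n$-tuple exists and the degree is at least $n$. Conjoining these bounds and their contrapositives shows that the degree equals exactly $n$ if and only if $p\in\bigl[1-\tfrac{1}{\sqrt[n+1]{2}},\,1-\tfrac{1}{\sqrt[n]{2}}\bigr)$.

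There is no substantive obstacle; the only delicate point is to confirm that the strict versus non-strict inequalities in the cases of Theorem \ref{thm-multint} line up correctly so that the boundary $p=1-\tfrac{1}{\sqrt[n+1]{2}}$ belongs to the degree-$n$ interval rather than the degree-$(n+1)$ interval, which is exactly what Theorem \ref{thm-multint}(1) gives us.
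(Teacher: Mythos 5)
Your proposal is correct and follows exactly the route the paper intends: the paper gives no separate proof of Lemma \ref{lem-degint}, stating only that it can be derived from Corollary \ref{cor-cw} and Theorem \ref{thm-multint}, which is precisely the derivation you carry out (including the check that the stated probabilities of non-empty intersection are strictly positive and that the intervals fit together at the boundaries). The only point worth noting is the trivial edge case $p=0$, where every intersection is all of $\cs$ and no finite degree exists, which is consistent with $0$ lying in none of the stated intervals.
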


The above observations allow us to prove the following:

\begin{thm}\label{thm-int-dim}
If $P$ is a symmetric Bernoulli random closed set from a family with degree of intersectability $n$, then for every $x\in P$, we have $\dim(x)\geq\frac{1}{n+1}$.
\end{thm}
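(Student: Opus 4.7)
The plan is to combine two ingredients already established in the paper: Lemma \ref{lem-degint}, which identifies the range of Bernoulli parameters $p$ yielding a given degree of intersectability, and the lower bound on effective dimension for members of $\mu_p^*$-random closed sets supplied by Corollary \ref{cor-dim}.

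First I would observe that, by Corollary \ref{cor-dim}(1), every $x$ that belongs to some $P\in\MLR_{\mu_p^*}$ satisfies $\dim(x)\geq -\log(1-p)$. So once we fix a parameter $p$ whose family of symmetric Bernoulli random closed sets has degree of intersectability equal to $n$, it suffices to establish the numerical inequality $-\log(1-p)\geq \tfrac{1}{n+1}$.

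Next I would invoke Lemma \ref{lem-degint} to read off what ``degree of intersectability $n$'' says about $p$. In the $n=1$ case the lemma gives $p\geq 1-\tfrac{1}{\sqrt{2}} = 1-2^{-1/2}$, while for $n\geq 2$ it gives $p\geq 1-\tfrac{1}{\sqrt[n+1]{2}}=1-2^{-1/(n+1)}$; both cases are uniformly captured by the single lower bound $1-p\leq 2^{-1/(n+1)}$. Taking $-\log$ of both sides reverses the inequality and yields $-\log(1-p)\geq \tfrac{1}{n+1}$, which chains with the previous paragraph to complete the proof.

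Since the argument is essentially a bookkeeping composition of two previously proved results, I do not expect any substantive obstacle. The only care needed is to check that the two cases of Lemma \ref{lem-degint} genuinely yield the same uniform bound at the left endpoint of the parameter interval, and to note that the direction of the inequality works out correctly: larger $p$ corresponds to thinner random closed sets, hence both to smaller intersectability (fewer sets can meet) and to larger dimension lower bound (members must have higher complexity).
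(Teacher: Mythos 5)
Your argument is correct and follows essentially the same route as the paper's proof: read off the lower bound $p\geq 1-2^{-1/(n+1)}$ from Lemma \ref{lem-degint} (in both the $n=1$ and $n\geq 2$ cases) and then apply Corollary \ref{cor-dim}(1) to get $\dim(x)\geq -\log(1-p)\geq \frac{1}{n+1}$. No gaps.
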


\begin{proof}
Let $P$ be a symmetric Bernoulli random closed set from a family with degree of intersectability $n\geq 2$ and let $x\in P$.  By Lemma \ref{lem-degint}(2), $P$ is $\mu^*_p$-random for some 
\[
p\in\Biggl[1-\dfrac{1}{\sqrt[n+1]{2}},1-\dfrac{1}{\sqrt[n]{2}}\Biggr).
\]
It follows that 
\[
\frac{1}{n+1}\leq -\log(1-p)<\frac{1}{n},
\]
and so by Corollary \ref{cor-dim}(1), we have $\dim(x)\geq\frac{1}{n+1}$.  (The case that $n=1$ can be shown by a nearly identical argument.)
\end{proof}

Theorem \ref{thm-int-dim} tells us how membership of some $x\in\cs$ in a $\mu_p^*$-random closed set from a family of random closed sets with a certain degree of intersectability puts a constraint on the effective dimension of $x$.  We now consider how the dimension of $x\in\cs$ puts a constraint on the degree of intersectability of any family of $\mu_p^*$-random closed sets such that  there is some $\mu_p^*$-random closed set $P$ with $x\in P$.  Hereafter, let us set $p_k=1-\frac{1}{\sqrt[k]{2}}$ for $k\geq 1$ (so that $p_1=1/2)$.

\begin{thm}\label{thm-dim-int} Suppose that $s\in(0,1]$ and $x\in\cs$ satisfies $\dim(x)=s$.

\medskip

\begin{enumerate}
\item For all $k\geq \lfloor\frac{1}{s}\rfloor$, $x$ is contained in a symmetric Bernoulli random closed set from a family with degree of intersectability $k$.

\bigskip

\item  If $s\neq \frac{1}{n}$ for $n\geq 1$, then $x$ is not contained in any symmetric Bernoulli random closed set from a family with degree of intersectability $k$ for $k<\lfloor\frac{1}{s}\rfloor$.
\end{enumerate}

\end{thm}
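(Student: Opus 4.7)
The plan is to reduce the statement to routine bookkeeping that combines two earlier facts. Lemma \ref{lem-degint} translates ``degree of intersectability equal to $k$'' into the parameter interval $[1-2^{-1/(k+1)},\, 1-2^{-1/k})$ (with the convention that the upper endpoint is $1/2$ when $k=1$); applying the substitution $p \mapsto -\log(1-p)$, this parameter interval becomes exactly $[1/(k+1),\, 1/k)$. Corollary \ref{cor-dim}(1) then converts between the strict bound $\dim(x) > -\log(1-p)$ and membership of $x$ in a $\mu_p^*$-random closed set. The entire theorem rides on matching these two bookkeeping devices.

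For part (1), I would fix $k \geq \lfloor 1/s\rfloor$ and set $p := 1-2^{-1/(k+1)}$, so that by Lemma \ref{lem-degint} the family of $\mu_p^*$-random closed sets has degree of intersectability exactly $k$. Since $k+1 \geq \lfloor 1/s\rfloor + 1 > 1/s$, we have $-\log(1-p) = 1/(k+1) < s = \dim(x)$, so Corollary \ref{cor-dim}(1) furnishes a $\mu_p^*$-random closed set containing $x$, which is precisely the desired witness.

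For part (2), I would argue contrapositively. If $x$ belonged to some $\mu_p^*$-random closed set whose associated family has degree of intersectability $k < \lfloor 1/s\rfloor$, then by Lemma \ref{lem-degint} we would have $-\log(1-p) \geq 1/(k+1)$, and Corollary \ref{cor-dim}(1) would force $s = \dim(x) \geq 1/(k+1)$. On the other hand, $k+1 \leq \lfloor 1/s\rfloor \leq 1/s$ yields $1/(k+1) \geq s$, so both inequalities collapse to $s = 1/(k+1)$, contradicting the standing hypothesis that $s \neq 1/n$ for any $n \geq 1$.

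There is no substantive obstacle; the argument is essentially a concatenation of prior results. The only subtle point worth flagging is the role of the exclusion $s \neq 1/n$: when $s = 1/n$ exactly, the choice $p = 1-2^{-1/n}$ makes $-\log(1-p) = s$, so Corollary \ref{cor-dim}(1) is silent on whether $x$ lies in some $\mu_p^*$-random closed set from a family of degree of intersectability $n-1$. This is precisely the obstruction to strengthening part (2) to all $s$, and it is why the optimal threshold in both parts is phrased with the floor $\lfloor 1/s\rfloor$ rather than a ceiling.
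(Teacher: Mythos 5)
Your proposal is correct and follows essentially the same route as the paper: both parts are obtained by combining Lemma \ref{lem-degint} with Corollary \ref{cor-dim}, choosing $p = p_{k+1}$ for part (1) and deriving a dimension lower bound of $\frac{1}{k+1}$ for part (2). The only cosmetic difference is where the hypothesis $s \neq \frac{1}{n}$ is invoked in part (2) — you collapse the two inequalities to $s = \frac{1}{k+1}$ and then contradict the hypothesis, while the paper uses it earlier to get the strict inequality $\lfloor \frac{1}{s}\rfloor < \frac{1}{s}$ — which is logically equivalent.
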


\begin{proof}
Given $x\in\cs$ with $\dim(x)=s$, let $n\in\omega$ satisfy $\frac{1}{n+1}<s\leq\frac{1}{n}$, so that  
$\lfloor\frac{1}{s}\rfloor=n$.  In particular,  for any $k\geq n$, we have $\dim(x)>\frac{1}{k+1}$.  By Corollary \ref{cor-dim}(2), it follows that $x$ is contained in a $\mu^*_{p_{k+1}}$-random closed set, which by Lemma \ref{lem-degint}(2) is in a family of random closed sets with degree of intersectability $k$, thereby establishing (1).

To show (2), suppose that $s\neq\frac{1}{n}$ for $n\geq 1$ and $x$ is in a symmetric Bernoulli random closed set from a family with degree of intersectability $k$ for some $k<\lfloor\frac{1}{s}\rfloor$, so that $k+1\leq\lfloor\frac{1}{s}\rfloor<\frac{1}{s}$.  Then by Lemma \ref{lem-degint}(2), $x$ is contained in a $\mu^*_p$-random closed set for some $p\in[p_{k+1},p_k)$, which by both parts of Corollary \ref{cor-dim} implies that 
\[
\dim(x)\geq\frac{1}{k+1}>s,
\]
which contradicts our assumption.
\end{proof}

We cannot improve Theorem \ref{thm-dim-int}(2) to cover the case that $s=\frac{1}{n}$ for $n\geq 2$ (the case for $n=1$ clearly cannot hold, since the the minimum value of the degree of intersectability of a family of random closed sets is 1).  For as shown by Diamondstone and Kjos-Hanssen in \cite{DiaKjo12}, there is some $x\in\cs$ with $\dim(x)=\frac{1}{2}$ and a $\mu_{p_2}^*$-random closed set $P$ such that $x\in P$.  Since the family of  $\mu_{p_2}^*$-random closed sets has  degree of intersectability equal to 1, this yields a counterexample to the possible extension of Theorem \ref{thm-dim-int}(2) for all $s\in(0,1]$.  In fact, with a slight modification of their proof, the Diamondstone/Kjos-Hanssen result can be readily generalized to hold for $s=\frac{1}{n}$ for all $n\geq 2$.

%
%

Note further that the generalization of the result due to Diamondstone/Kjos-Hanssen cited in the previous paragraph does not hold for all sequences $x$ satisfying $\dim(x)=\frac{1}{n}$ for some $n\geq 2$.  That is, for $n\geq 2$, it is not true that for $x\in\cs$,  $\dim(x)=\frac{1}{n}$ implies that $x$ is a member of a symmetric Bernoulli random closed set from a family with degree of intersectability equal to $n-1$.  By an unpublished result due to Jason Rute, the implications in Theorem \ref{thm-dkh} do not reverse.  Consequently, there exists, for instance, a sequence of dimension 1/2 that is not a member of any $\mu_{p_2}^*$-random closed set (the family of which has degree of intersectability equal to 1).  Thus, at best, we can conclude the following (by a proof nearly identical to the proof of Theorem \ref{thm-dim-int}(2)):

\begin{thm}
For $x\in\cs$ with $\dim(x)=\frac{1}{n}$ for some $n\geq 2$, $x$ is not contained in any symmetric Bernoulli random closed set from a family with degree of intersectability $k$ for $k<\lfloor\frac{1}{s}\rfloor-1$.
\end{thm}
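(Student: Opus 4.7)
The plan is to mirror the proof of Theorem~\ref{thm-dim-int}(2), since this result is the natural analogue in the degenerate case $s=\frac{1}{n}$, with the bound on $k$ weakened by one to accommodate the nonstrict inequality in Corollary~\ref{cor-dim}(1). The strategy is proof by contradiction: assume $x$ with $\dim(x)=\frac{1}{n}$ sits inside a symmetric Bernoulli random closed set whose ambient family has degree of intersectability $k\leq n-2$, and then use Lemma~\ref{lem-degint} together with Corollary~\ref{cor-dim}(1) to force $\dim(x)\geq \frac{1}{k+1}$, contradicting $\dim(x)=\frac{1}{n}$.

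First I would unpack the hypothesis "degree of intersectability $k$" via Lemma~\ref{lem-degint}(2), which pins the underlying parameter $p$ to the half-open interval $[\,p_{k+1},p_k\,)$ with $p_j=1-1/\sqrt[j]{2}$. Since $k\leq n-2$ forces $n\geq 3$ and hence $k\geq 1$, the only Lemma~\ref{lem-degint}(2) case is in play (the degenerate $k=1$ case of Lemma~\ref{lem-degint}(1) is subsumed in the interval reading). Next, I would invoke the right-hand implication of Corollary~\ref{cor-dim}(1): if $x\in P$ for some $P\in\MLR_{\mu^*_p}$, then $\dim(x)\geq -\log(1-p)$. Combined with $p\geq p_{k+1}$, this yields
\[
\dim(x)\;\geq\; -\log(1-p_{k+1})\;=\;\frac{1}{k+1}.
\]

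To close the argument, I would observe that $k\leq n-2$ gives $\frac{1}{k+1}\geq \frac{1}{n-1}>\frac{1}{n}=\dim(x)$, contradicting the previous display. I do not expect any genuine obstacle, as the proof is a straightforward reassembly of previously established ingredients. The only conceptual point worth noting is why $k$ cannot be pushed higher than $n-2$: Corollary~\ref{cor-dim}(1) supplies only the weak inequality $\dim(x)\geq -\log(1-p)$, so at the boundary $p=p_n$ one picks up only $\dim(x)\geq \frac{1}{n}$ rather than a strict inequality, and this is precisely the reason the statement weakens $k<\lfloor\frac{1}{s}\rfloor$ (as in Theorem~\ref{thm-dim-int}(2)) to $k<\lfloor\frac{1}{s}\rfloor-1$ in this case.
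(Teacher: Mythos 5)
Your proof is correct and is essentially the paper's own argument: the paper establishes this theorem by exactly the same adaptation of the proof of Theorem~\ref{thm-dim-int}(2), using Lemma~\ref{lem-degint} to get $p\geq p_{k+1}$ and Corollary~\ref{cor-dim}(1) to force $\dim(x)\geq\frac{1}{k+1}>\frac{1}{n}$ when $k\leq n-2$. Your closing remark about why the bound weakens to $k<\lfloor\frac{1}{s}\rfloor-1$ (the nonstrict inequality at the boundary $p=p_n$) also matches the paper's discussion, which cites the Diamondstone/Kjos-Hanssen construction showing the case $k=n-1$ can actually occur.
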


\bibliographystyle{alpha} \bibliography{rcs}

\end{document}